\newtheorem{theorem}{Theorem}[section]
\newtheorem{lemma}{Lemma}[section]
\newtheorem{cor}{Corollary}[section]
\numberwithin{equation}{section}
\theoremstyle{definition}
\theoremstyle{remark}
\begin{document}
\title{Finite Sections of Weighted Hardy's Inequality}
\author{Peng Gao}
\address{Department of Computer and Mathematical Sciences,
University of Toronto at Scarborough, 1265 Military Trail, Toronto
Ontario, Canada M1C 1A4} \email{penggao@utsc.utoronto.ca}
\date{December 11, 2007.}
\subjclass[2000]{Primary 15A60} \keywords{Hardy's inequality}


\begin{abstract}
 We study finite sections of weighted Hardy's inequality following the approach of De Bruijn. Similar to the unweighted case, we obtain an asymptotic expression for the optimal constant.
\end{abstract}

\maketitle
\section{Introduction}
\label{sec 1} \setcounter{equation}{0}
   Suppose throughout that $p\neq 0, \frac{1}{p}+\frac{1}{q}=1$.
   Let $l^p$ be the Banach space of all complex sequences ${\bf a}=(a_n)_{n \geq 1}$ with norm
\begin{equation*}
   ||{\bf a}||: =\Big(\sum_{n=1}^{\infty}|a_n|^p\Big)^{1/p} < \infty.
\end{equation*}
  The celebrated
   Hardy's inequality (\cite[Theorem 326]{HLP}) asserts that for $p>1$,
\begin{equation}
\label{eq:1} \sum^{\infty}_{n=1}\Big{|}\frac {1}{n}
\sum^n_{k=1}a_k\Big{|}^p \leq \Big(\frac
{p}{p-1} \Big )^p\sum^\infty_{k=1}|a_k|^p.
\end{equation}

   Hardy's inequality can be regarded as a special case of the
   following inequality:
\begin{equation*}
\label{01}
   \sum^{\infty}_{j=1}\Big{|}\sum^{\infty}_{k=1}c_{j,k}a_k
   \Big{|}^p \leq U \sum^{\infty}_{k=1}|a_k|^p,
\end{equation*}
   in which $C=(c_{j,k})$ and the parameter $p$ are assumed
   fixed ($p>1$), and the estimate is to hold for all complex
   sequences ${\bf a}$. The $l^{p}$ operator norm of $C$ is
   then defined as the $p$-th root of the smallest value of the
   constant $U$:
\begin{equation*}
\label{02}
    ||C||_{p,p}=U^{\frac {1}{p}}.
\end{equation*}

    Hardy's inequality thus asserts that the Ces\'aro matrix
    operator $C$, given by $c_{j,k}=1/j , k\leq j$ and $0$
    otherwise, is bounded on {\it $l^p$} and has norm $\leq
    p/(p-1)$. (The norm is in fact $p/(p-1)$.)

    We say a matrix $A$ is a summability matrix if its entries satisfy:
    $a_{j,k} \geq 0$, $a_{j,k}=0$ for $k>j$ and
    $\sum^j_{k=1}a_{j,k}=1$. We say a summability matrix $A$ is a weighted
    mean matrix if its entries satisfy:
\begin{equation*}
    a_{j,k}=\lambda_k/\Lambda_j,  ~~ 1 \leq k \leq
    j; \Lambda_j=\sum^j_{i=1}\lambda_i, \lambda_i \geq 0, \lambda_1>0.
\end{equation*}
  
    Hardy's inequality \eqref{eq:1} motivates one to
    determine the $l^{p}$ operator norm of an arbitrary summability matrix $A$.
We refer the readers to the articles \cite{G3}, \cite{G5} and the references therein for recent progress in this direction. 

   From now on we will assume $a_n \geq 0$ for $n \geq 1$ and any
   infinite sum converges. We note here by a change of variables $a_k \rightarrow a^{1/p}_k$ in \eqref{eq:1}, we obtain the following well-known Carleman's inequality as the limiting case of Hardy's inequality on letting $p \rightarrow +\infty$:
\begin{equation*}
   \sum^\infty_{n=1}\Big(\prod^n_{k=1}a_k \Big)^{\frac 1{n}}
\leq e\sum^\infty_{n=1}a_n,
\end{equation*}
   with the constant $e$ best possible.

  There is a rich literature on many different proofs of Hardy's and Carleman's inequality as well as their generalizations and extensions. We shall refer the readers to the survey articles  \cite{P&S}, \cite{D&M} and \cite{KMP} as well as the references therein for an account of Hardy's and Carleman's inequality. 

   In \cite{De}, De Bruijn studied finite sections of Carleman's inequality:
\begin{equation*}
   \sum^N_{n=1}\Big(\prod^n_{k=1}a_k \Big)^{\frac 1{n}}
\leq \mu_N\sum^N_{n=1}a_n.
\end{equation*}   
   where $N \geq 1$ is any integer. He showed that the best constant satisfies
\begin{equation*}
   \mu_N=e-\frac {2\pi^2e}{(\log N)^2}+ O \Big (\frac 1{(\log N)^3} \Big ).
\end{equation*}  
 
  De Bruijn's result was generalized by Ackermans for the case of finite sections of Hardy's inequality:
\begin{equation*}
   \sum^{N}_{n=1}\Big{|}\frac {1}{n}
\sum^n_{k=1}a_k\Big{|}^p \leq \mu_N\sum^N_{k=1}|a_k|^p,
\end{equation*}
where $N \geq 1$ is any integer and by an abuse of notation, we use the same symbol $\mu_N$ here to denote the best constant that makes the above inequality hold. 
  Ackermans \cite{A} showed that the best constant satisfies ($p>1$)
\begin{equation*}
   \mu^{1/p}_N=q-\frac {2\pi^2q^2/p}{(\log N)^2}+ O \Big (\frac 1{(\log N)^3} \Big ).
\end{equation*}  
 
   We point out here that in the case of $p=2$, one can also treat finite sections of Hardy's inequality on relating it to eigenvalues of certain symmetric matrices. More generally, using results of Widom \cite{Wi1}, \cite{Wi2}, Wilf \cite{W1, W2} obtained similar results when such matrices are generalized by a function $K(x,y)$, with $K(x,y)$ being symmetric, non-negative for non-negative $x$ and $y$, homogeneous of degree $-1$, and decreasing. We note here special cases of Wilf's result include the Hilbert matrix, given by $K(x,y)=1/(x+y)$, which was first studied by De Bruijn and Wilf in \cite{D&W}. Another case is the matrix corresponding to Hardy's inequality, as one can show that it is similar to the matrix given by $K(x,y)=1/\max(x,y)$ (see \cite{W1}). We remark here this approach only gives a weaker result compared to the result of Ackermans above. Bolmarcich \cite{Bol} further extended Wilf's result to the case $p>1$ but his result is less precise.

   Motivated by the above results of De Bruijn and Ackerman, it is our goal in this paper to study finite sections of weighted Hardy's inequality:
\begin{equation}
\label{1}
   \sum^{N}_{n=1}\Big{|}\sum^{n}_{k=1} \frac {\lambda_ka_k}{\Lambda_n}
   \Big{|}^p \leq \mu_N \sum^{N}_{n=1}|a_n|^p.
\end{equation}

    We note here in \cite{G6}, the author has obtained asymptotic expressions for $\mu_N$ for the weighted Carleman's inequality under certain conditions, following De Bruijn's approach in \cite{De}. This maybe regarded as the limiting case $p \rightarrow +\infty$ of weighed Hardy's inequality, following our discussions above. The structure of the paper is similar to that of \cite{G6} and we point out here that what we have in mind are the cases when $\lambda_k=k^{\alpha}$ and for this reason we shall prove the following
\begin{theorem}
\label{thm2}
 Let $p \geq 2$ be fixed and $\{ \lambda_k \}^{\infty}_{k=1}$ be a non-decreasing positive sequence satisfying 
\begin{eqnarray}
\label{5}
 && L = \sup_n\Big(\frac {\Lambda_{n+1}}{\lambda_{n+1}}-\frac
    {\Lambda_n}{\lambda_n}\Big) \leq 1, \\ 
\label{1.3}
 && \sup_k \frac {\lambda_{k+1}}{\lambda_k} < +\infty, \\
\label{1.3'}
  & &  \frac {\lambda_{k}}{\lambda_{k+1}} = 1-\frac {1/L-1}{k}+O(\frac 1{k^2}), \\
\label{3.5'}
 & & \frac {\lambda_k}{\Lambda_k} = \frac {1}{Lk}+O(\frac 1{k^2}), \\
\label{1.6}
 & & \inf_k\Big (\frac {\Lambda_{k+1}}{\lambda_{k+1}}-\frac {\Lambda_k}{\lambda_k} \Big ) > 0, \\
\label{1.4}
   & & 1-L/p \geq  \frac {\Lambda_{k}}{\lambda_{k}}\Big (1-\Big (\frac {\Lambda_k}{\Lambda_{k+1}}\Big )^{\frac {p-1}{p}}\Big (\frac {\lambda_k}{\lambda_{k+1}}\Big )^{\frac {1}{p}}  \Big ).
\end{eqnarray}
  Then as $N \rightarrow +\infty$, inequality \eqref{1} holds with the best constant satisfying:
\begin{equation*}
   \mu_N=(1-L/p)^{-p} - \frac {2L^2\pi^2(1-L/p)^{-2-p}}{q(\log N)^2}+O\Big (\frac 1{(\log N)^3} \Big ).
\end{equation*}  
\end{theorem}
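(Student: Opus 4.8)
The plan is to follow the variational method of De Bruijn \cite{De}, in the form Ackermans \cite{A} used in the unweighted case, with the weight sequence entering only through the normalized asymptotics \eqref{1.3'} and \eqref{3.5'} (which hold, e.g., for $\lambda_k=k^\alpha$ with $\alpha=1/L-1$, where $\Lambda_k/\lambda_k\sim Lk$); the remaining hypotheses \eqref{5}, \eqref{1.3}, \eqref{1.6} and \eqref{1.4} are used to identify the limiting constant and to control the true extremizer. A preliminary step would record the consequences of \eqref{5}--\eqref{1.4} for the weight sequence (uniform two-sided bounds on $\lambda_{k+1}/\lambda_k$, $\Lambda_k/\lambda_k$, and their increments), much as in \cite{G6}. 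Since all $a_n\ge0$, the constant $\mu_N$ is the maximum of $\Phi({\bf a}):=\sum_{n=1}^{N}y_n^{\,p}$, with $y_n:=\Lambda_n^{-1}\sum_{k=1}^{n}\lambda_k a_k$, over the compact set $\{a_n\ge0,\ \sum_{n=1}^N a_n^{\,p}=1\}$; a short perturbation argument shows a maximizer has every $a_n>0$, so it satisfies the Euler--Lagrange relations
\[
 \lambda_j\sum_{n=j}^{N}\frac{y_n^{\,p-1}}{\Lambda_n}=\mu_N\,a_j^{\,p-1},\qquad 1\le j\le N,
\]
together with the identities $\Lambda_n y_n-\Lambda_{n-1}y_{n-1}=\lambda_n a_n$ ($\Lambda_0y_0:=0$). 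Note that the infinite version of \eqref{1} holds with best constant $(1-L/p)^{-p}$ (implied by the theorem, or obtained directly from \eqref{1.4}, cf.\ \cite{G3}, \cite{G5}), so $\mu_N$ increases to $(1-L/p)^{-p}$ and the theorem is a statement about the deficit.

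Next I would reduce the coupled system to a single three-term recursion: taking first differences in $j$ removes the tail sum, and substituting $a_j=\tfrac{\Lambda_j}{\lambda_j}\bigl(y_j-\tfrac{\Lambda_{j-1}}{\Lambda_j}y_{j-1}\bigr)$ leaves a recursion for $y_j$ alone, with the ``boundary conditions'' furnished by $y_1=a_1$ (as $\Lambda_1=\lambda_1$) and by the vanishing of $\sum_{n>N}$ at $j=N$. Because the extremal profile of the infinite problem is $a_n\asymp n^{-1/p}$ — the case of asymptotic equality in \eqref{1.4} — I would set $w_j:=j^{1/p}y_j$, which is then asymptotically constant, write $w_j=w_\infty(1+\varepsilon_j)$, and pass to the slow variable $s=\log j/\log N\in[0,1]$. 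Feeding the second-order expansions \eqref{1.3'} and \eqref{3.5'} into the recursion and linearizing in $\varepsilon$ reduces it, to leading order in $1/\log N$ and after a suitable substitution, to the Sturm--Liouville problem $-\varepsilon''(s)=\Theta\,\varepsilon(s)$ on $[0,1]$ with Dirichlet conditions $\varepsilon(0)=\varepsilon(1)=0$ (the endpoints absorbing the boundary layers near $n=1$ and $n=N$), whose least eigenvalue is $\Theta=\pi^2$, with eigenfunction $\sin\pi s$. Carrying $\Theta/(\log N)^2$ back through the Rayleigh quotient $\Phi({\bf a})/\sum a_n^{\,p}$ — the factor $L$ coming from \eqref{3.5'}, the factor $p-1$ (i.e.\ $1/q$) from the exponent $p-1$ in the Euler--Lagrange relation, and a numerical factor $2$ from the quadratic term of the Taylor expansion of $t\mapsto t^p$ — yields
\[
 \mu_N=(1-L/p)^{-p}-\frac{2L^2\pi^2(1-L/p)^{-2-p}}{q(\log N)^2}+O\!\Bigl(\frac1{(\log N)^3}\Bigr),
\]
equivalently $\mu_N^{1/p}=\tfrac{p}{p-L}-\tfrac{2L^2p(p-1)\pi^2}{(p-L)^3(\log N)^2}+O((\log N)^{-3})$, which reduces to Ackermans's formula when $L=1$.

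To turn this into a proof I would establish matching upper and lower bounds. For the lower bound it suffices to substitute into $\Phi({\bf a})/\sum a_n^{\,p}$ the explicit trial sequence $a_n=n^{-1/p}g(\log n/\log N)$ for a fixed smooth $g$ vanishing at $0$ and $1$, evaluate the weighted sums by Euler--Maclaurin summation together with \eqref{1.3'}--\eqref{3.5'}, and optimize over $g$; the optimum is $g(s)\propto\sin\pi s$ and returns the displayed constant. The upper bound is the delicate half: from the Euler--Lagrange relation one must derive a priori monotonicity and size estimates for the extremal sequence — here \eqref{5} and \eqref{1.6} supply the correct sign of the relevant difference quotients of $\Lambda_n/\lambda_n$, while $p\ge2$ makes the second-order Taylor remainders of $t\mapsto t^p$ and $t\mapsto t^{p-1}$ controllable with the right sign — then show $\varepsilon_j=O(1/\log N)$ uniformly and that $\varepsilon_j$ matches the $\sin$-profile up to $O((\log N)^{-2})$, and finally sum the recursion against a comparison weight to bound $\mu_N$ from above by the same expression.

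The hard part will be this last step. One must show that the \emph{actual} maximizer — about which one has only the Euler--Lagrange relations and the crude bound $\mu_N\le(1-L/p)^{-p}$ — is so close to the continuum ansatz that the $(\log N)^{-2}$ coefficient is determined exactly while every error term is genuinely $O((\log N)^{-3})$, uniformly in $N$, despite the boundary layer at $n=N$ and the breakdown of the continuum approximation near $n=1$. This is precisely where the technical hypotheses \eqref{5}, \eqref{1.3}, \eqref{1.6}, \eqref{1.4} and $p\ge2$ are consumed; the analogue in the weighted Carleman case \cite{G6} suggests that with them in hand the (lengthy) bookkeeping goes through.
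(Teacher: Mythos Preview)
Your proposal takes a genuinely different route from the paper. Both start from the Euler--Lagrange system, but at that point they diverge. You pass to a \emph{second-order} recursion in the averages $y_j$, linearize about the extremal profile $y_j\sim j^{-1/p}$, rescale to $s=\log j/\log N$, and identify the $(\log N)^{-2}$ coefficient with the lowest Dirichlet eigenvalue $\pi^2$ of $-\varepsilon''=\Theta\varepsilon$ on $[0,1]$. The paper instead performs a Riccati-type reduction of the Euler--Lagrange system to a \emph{first-order} nonlinear recurrence for an auxiliary quantity $h_k=\Omega_k^{1/(p-1)}-(\lambda_k/\Lambda_k)\nu^{1/(p-1)}$ (see \eqref{2.2}), proves that $\mu_N$ is determined by the \emph{breakdown index} $N_\mu$ at which $h_k$ escapes, approximates the recurrence by $dh/d\log k = L^{-1}\bigl(h^p/(p-1)-(p-L)h/(p-1)+\nu^{1/(p-1)}\bigr)$, and reads off $\log N_\mu$ from the integral $\theta(y)=\int_0^y dx/(\ldots)$; the $\pi$ in the answer arises from the quadratic minimum of the denominator at $h=(1-L/p)^{1/(p-1)}$, not from a Sturm--Liouville eigenvalue.

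What each approach buys: the paper's first-order reduction gives the upper and lower bounds \emph{simultaneously}, because one works throughout with the exact recurrence for the exact extremizer; the technical work (Lemmas \ref{lem5.1}--\ref{lem5.3}) is to show that the heuristic integral $\theta$ approximates the true increment $\theta(h_{k+1})-\theta(h_k)$ up to a summable error, and this is exactly where \eqref{1.4} and $p\ge2$ are spent. Your route, by contrast, separates the two bounds; the lower bound via the trial sequence $a_n=n^{-1/p}\sin(\pi\log n/\log N)$ is clean and would work, but the upper bound --- which you correctly flag as the hard part --- is left as a program. The difficulty is real: for $p\neq 2$ the three-term recursion is genuinely nonlinear, and to linearize you must first prove $\varepsilon_j=O(1/\log N)$ uniformly, which is essentially the statement you are trying to prove. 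You would also need to justify the Dirichlet condition at $s=0$ (the paper's method sidesteps this because $h_1=0$ is the exact initial condition of the first-order recurrence). So your outline is plausible but contains a circularity that the paper's Riccati reduction avoids.
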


   We note here that the case $k=1$ of \eqref{1.6} implies $L>0$, which we shall use without further mentioning throughout the paper. Note also that this makes \eqref{1.3'} and  \eqref{3.5'} meaningful. We may also assume $N \geq 2$ from now on.
\section{Preliminary Treatment}
\label{sec 2} \setcounter{equation}{0}
   In this section, we summarize some of the proof in \cite{G5} that gives an upper bound for the number $\mu_N$ appearing in \eqref{1} assuming \eqref{5}, a result first obtained by Cartlidge \cite{Car}. We refer the reader to \cite{G5} for more details on our discussions below. Let
\begin{equation*}
   A_n=\sum^n_{k=1}\frac {\lambda_ka_k}{\Lambda_n},
\end{equation*}
   our goal is to determine the maximum value $\mu_N$ of $\sum^N_{n=1}A^p_n$ subject to the constraint $\sum^N_{n=1}a^p_n=1$ here. It is shown in \cite{G5} that it suffices to consider the case $a_n > 0$ for all $1 \leq n \leq N$ when the maximum is reached. We now define
\begin{equation*}
  F({\bf a}; \mu)=\sum^N_{n=1}A^p_n-\mu (\sum^N_{n=1}a^p_n-1),
\end{equation*}
  where ${\bf a}=(a_n)_{1 \leq n \leq N}$. By the Lagrange method, we need to solve $\nabla F=0$, which turns out to yield a recurrence relation starting with
$\Omega_1(\mu)=1/\mu$ and
\begin{equation*}
   \Omega^{\frac 1{p-1}}_{k+1}(\mu)=\frac {\Lambda_{k}}{\Lambda_{k+1}}\Big (\frac {\Omega_{k}(\mu)}{\frac {\lambda_{k+1}}{\Lambda_k}(\Lambda_k/\lambda_k-\Omega_{k}(\mu))} \Big )^{\frac 1{p-1}}+\frac {\lambda_{k+1}}{\Lambda_{k+1}} \Big ( \frac 1{ \mu} \Big )^{\frac 1{p-1}}.
\end{equation*}
   It is then shown in \cite{G5} that if \eqref{5} is satisfied and $\mu \geq (1-L/p)^{-p}$, then
for $1 \leq n \leq N$,
\begin{equation}
\label{2.3}
  \frac {\lambda_{n}}{\Lambda_{n}} \Big ( \frac 1{ \mu} \Big )^{\frac 1{p-1}}\leq \Omega^{\frac 1{p-1}}_n(\mu) \leq \Big(\frac {p-L}{p} \Big )^{1/(p-1)}-\frac {L}{p}\Big(\frac {p-L}{p}\Big)^{1/(p-1)}\Big (\frac {\lambda_n}{\Lambda_n} \Big ).
\end{equation}
   On the other hand, it is also shown that $\Omega_N(\mu_N)=\Lambda_{N}/\lambda_{N}$, and this forces $\mu_N < (1-L/p)^{-p}$.
To facilitate our approach in what follows, we now write $\nu=1/\mu$ and define a new sequence $h_k(\nu)$ by
\begin{equation*}
  h_k(\nu)=\Omega^{\frac 1{p-1}}_{k}(1/\nu)- \frac {\lambda_k}{\Lambda_k}\nu^{\frac 1{p-1}}.
\end{equation*}
  It follows that $h_1(\nu)=0$ and 
\begin{equation}
\label{2.2}
   h_{k+1}(\nu)=\frac {\Lambda_{k}}{\Lambda_{k+1}}\Big (\frac {\lambda_k}{\lambda_{k+1}}\Big)^{\frac 1{p-1}}\Big(h_{k}(\nu)+\frac {\lambda_k}{\Lambda_k}\nu^{\frac 1{p-1}}\Big)\Big (1-\frac {\lambda_{k}}{\Lambda_k}\Big(h_{k}(\nu)+\frac {\lambda_k}{\Lambda_k}\nu^{\frac 1{p-1}}\Big)^{p-1} \Big )^{-\frac 1{p-1}}.
\end{equation}
   We note that it follows from \eqref{2.3} that for 
$1 \leq n \leq N$ (with $\Lambda_0=0$ here)
\begin{equation}
\label{2.4}
    0 \leq h_{n}\Big(  \Big(\frac {p-L}{p} \Big )^{p}  \Big ) \leq \frac {\Lambda_{n-1}}{\Lambda_n} \Big(\frac {p-L}{p} \Big )^{1/(p-1)}.
\end{equation}

\section{The Breakdown Index}
\label{sec 3} \setcounter{equation}{0}
    As in \cite{De}, we now try to evaluate $h_k(1/\mu)$ consecutively from \eqref{2.2} for any $\mu>0$, starting with $h_1=0$. Certainly we are only interested in the real values of $h_k$ and hence we say that the procedure breaks down at the first $k$ where 
\begin{equation*}
  1-\frac {\lambda_{k}}{\Lambda_k}\Big(h_{k} \Big(\frac {1}{\mu} \Big)+\frac {\lambda_{k}}{\Lambda_{k}} \Big ( \frac 1{ \mu} \Big )^{\frac 1{p-1}}\Big)^{p-1} \leq 0.
\end{equation*}
 Or equivalently, 
\begin{equation}
\label{3.1}
 h_k \Big(\frac {1}{\mu} \Big) \geq \Big(\frac {\Lambda_k}{\lambda_{k}}\Big )^{\frac 1{p-1}}-\frac {\lambda_{k}}{\Lambda_{k}} \Big ( \frac 1{ \mu} \Big )^{\frac 1{p-1}}.
\end{equation} 
   We define the breakdown index $N_{\mu}$ as the smallest $k$ for which inequality \eqref{3.1} holds if there is such a $k$ and we put $N_{\mu} = +\infty$ otherwise. Thus for all $\mu>0$ we can say that $h_k(1/\mu)$ is defined for all $k \leq N_{\mu}$.

    Note that \eqref{2.3} implies $N_{\mu} = +\infty$ when $\mu \geq (1-L/p)^{-p}$.  So from now on we may assume $0 < \mu < (1-L/p)^{-p}$ and it is convenient to have some monotonicity properties available in this case. We have $h_1(1/\mu)=0$ for $0 < \mu < (1-L/p)^{-p}$ and we let $\mu_1$ be the largest $\mu$ for which inequality \eqref{3.1} holds for $k=1$, this implies $\mu_1=1$. Now $h_2(1/\mu)$ is defined for $\mu > \mu_1$, and $h_2(1/\mu)$ is given by \eqref{2.2} as
\begin{equation*}
   h_{2}(1/\mu)=\frac {\Lambda_{1}}{\Lambda_{2}}\Big (\frac {\lambda_1}{\lambda_{2}}\Big)^{\frac 1{p-1}}(\mu- 1 )^{-\frac 1{p-1}},
\end{equation*}
 which is a decreasing function of $\mu$ for $\mu > \mu_1$. Note also that the right-hand side expression of inequality \eqref{3.1} is an increasing function of $\mu$ for any fixed $k$. It follows from \eqref{2.4} that
\begin{equation*}
  \lim_{\mu \rightarrow \mu_1^+}h_2(1/\mu)=+\infty; \hspace{0.1in} h_2\Big(\Big(\frac {p-L}{p} \Big )^{p}\Big ) \leq \frac {\Lambda_{1}}{\Lambda_2} \Big(\frac {p-L}{p} \Big )^{1/(p-1)} < \Big(\frac {\Lambda_2}{\lambda_{2}}\Big )^{\frac 1{p-1}}-\frac {\lambda_{2}}{\Lambda_{2}} \Big(\frac {p-L}{p} \Big )^{\frac p{p-1}}.
\end{equation*}
   Thus there is exactly one value of $\mu_1< \mu < (1-L/p)^{-p}$ for which inequality \eqref{3.1} holds with equality for $k=2$ and we define this value of $\mu$ to be $\mu_2$. This procedure can be continued. At each step we argue that $h_k(1/\mu)$ is defined and decreasing for $\mu > \mu_{k-1}$, that 
\begin{equation*}
  \lim_{\mu \rightarrow \mu_{k-1}^+}h_k(1/\mu)=+\infty; \hspace{0.1in} h_k\Big(\Big(\frac {p-L}{p} \Big )^{p}\Big ) \leq \frac {\Lambda_{k-1}}{\Lambda_k} \Big(\frac {p-L}{p} \Big )^{1/(p-1)} < \Big(\frac {\Lambda_k}{\lambda_{k}}\Big )^{\frac 1{p-1}}-\frac {\lambda_{k}}{\Lambda_{k}} \Big(\frac {p-L}{p} \Big )^{\frac 1{p-1}}.
\end{equation*}
   We then infer that $\mu_k$ is uniquely determined by 
\begin{equation*}
 h_k \Big(\frac {1}{\mu_k} \Big) = \Big(\frac {\Lambda_k}{\lambda_{k}}\Big )^{\frac 1{p-1}}-\frac {\lambda_{k}}{\Lambda_{k}} \Big ( \frac 1{ \mu_k} \Big )^{\frac 1{p-1}}.
\end{equation*} 
  Moreover, $h_{k+1}(1/\mu)$ is again defined and decreasing for $\mu > \mu_{k}$ as both terms on the right of \eqref{2.2} involving $\nu=1/\mu$ are non-negative decreasing functions of $\mu$.
  
   Thus by induction we obtain that
\begin{equation}
\label{3.1'}
   1=\mu_1 < \mu_2 < \mu_3 < \ldots < (1-L/p)^{-p},
\end{equation}
   and that $h_{k+1}(1/\mu)$ is defined and decreasing for $\mu > \mu_{k}$. Moreover, 
\begin{equation*}
 h_k \Big(\frac {1}{\mu} \Big) > \Big(\frac {\Lambda_k}{\lambda_{k}}\Big )^{\frac 1{p-1}}-\frac {\lambda_{k}}{\Lambda_{k}} \Big ( \frac 1{ \mu} \Big )^{\frac 1{p-1}}
\end{equation*} 
  if $\mu_{k-1} < \mu < \mu_k$, 
\begin{equation*}
  h_k \Big(\frac {1}{\mu_k} \Big) = \Big(\frac {\Lambda_k}{\lambda_{k}}\Big )^{\frac 1{p-1}}-\frac {\lambda_{k}}{\Lambda_{k}} \Big ( \frac 1{ \mu_k} \Big )^{\frac 1{p-1}}
\end{equation*} 
   and
\begin{equation*}
 h_k \Big(\frac {1}{\mu} \Big) < \Big(\frac {\Lambda_k}{\lambda_{k}}\Big )^{\frac 1{p-1}}-\frac {\lambda_{k}}{\Lambda_{k}} \Big ( \frac 1{ \mu_k} \Big )^{\frac 1{p-1}}
\end{equation*} 
  if $ \mu > \mu_k$.

  It follows that the breakdown index $N_{\mu}$ equals $1$ if $\mu \leq \mu_1$, $2$ if $\mu_1 < \mu \leq \mu_2$, etc. We remark here that for fixed $\mu \leq (1-L/p)^{-p}$, the $h_k(1/\mu)$'s are non-negative and increase as $k$ increases from $1$ to $N_k$. This follows from \eqref{2.2} by noting that
\begin{eqnarray}
\label{3.2}
 &&  h_{k+1}(\nu)-h_k(\nu) \\
&=& \frac {\Lambda_{k}}{\Lambda_{k+1}}\Big (\frac {\lambda_k}{\lambda_{k+1}}\Big)^{\frac 1{p-1}}\Big(h_{k}(\nu)+\frac {\lambda_k}{\Lambda_k}\nu^{\frac 1{p-1}}\Big)\Big (1-\frac {\lambda_{k}}{\Lambda_k}\Big(h_{k}(\nu)+\frac {\lambda_k}{\Lambda_k}\nu^{\frac 1{p-1}}\Big)^{p-1} \Big )^{-\frac 1{p-1}}-h_k(\nu). \nonumber
\end{eqnarray}
   It thus suffices to show the right-hand side expression above is non-negative. Equivalently, this is $0 \leq f_k(h_k(\nu)+\lambda_k\nu^{\frac 1{p-1}}/\Lambda_k)$, where
\begin{equation*}
 f_k(x)= \frac {\Lambda_{k}}{\Lambda_{k+1}}\Big (\frac {\lambda_k}{\lambda_{k+1}}\Big)^{\frac 1{p-1}}x\Big (1-\frac {\lambda_{k}}{\Lambda_k}x^{p-1} \Big )^{-\frac 1{p-1}}-x+\frac {\lambda_k\nu^{\frac 1{p-1}}}{\Lambda_k}.
\end{equation*}
  It is easy to see that $f_k(x)$ is minimized at $x=x_0$ which satisfies
\begin{equation*}
 \frac {\Lambda_{k}}{\Lambda_{k+1}}\Big (\frac {\lambda_k}{\lambda_{k+1}}\Big)^{\frac 1{p-1}}\Big (1-\frac {\lambda_{k}}{\Lambda_k}x^{p-1}_0 \Big )^{-\frac p{p-1}}=1.
\end{equation*}
   It follows that
\begin{equation*}
 f_k(x) \geq f_k(x_0)=\frac {\lambda_{k}}{\Lambda_{k}}(\nu^{\frac 1{p-1}}-x^{p}_0).
\end{equation*}
   Thus it suffices to check $f_k(x_0) \geq 0$ or equivalently,
\begin{equation*}
   \nu^{1/p} \geq x^{p-1}_0=\frac {\Lambda_{k}}{\lambda_{k}}\Big (1-\Big (\frac {\Lambda_k}{\Lambda_{k+1}}\Big )^{\frac {p-1}{p}}\Big (\frac {\lambda_k}{\lambda_{k+1}}\Big )^{\frac {1}{p}}  \Big ).
\end{equation*}
   Note that $\nu \geq (1-L/p)^{p}$  and it follows from \eqref{1.4} that the above inequality holds, which implies that $f_k(x_0) \geq 0$ so that the $h_k(1/\mu)$'s increase as $k$ increases from $1$ to $N_k$. 

  The breakdown condition \eqref{3.1} is slightly awkward. We now replace it by a simpler one in the case of $p \geq 2$, for example, $h_k > C_0$ (to be determined in what follows), by virtue of the following argument. Let $0 < \mu < (1-L/p)^{-p}$ and assume that $N_0$ is the smallest integer such that $h_{N_0} > C_0$. Note that \eqref{3.5'} implies that $\lim_{k \rightarrow +\infty}\Lambda_k/\lambda_k = +\infty$ so that the right-hand side expression of \eqref{3.1} approaches $+\infty$ as $k$ tends to $+\infty$. Hence we may assume $N_{\mu} \geq N_0$ without loss of generality. Then we have
\begin{equation*}
  \log N_{\mu}- \log N_0 =O(1).
\end{equation*}
   For, if $N_0 \leq k \leq N_{\mu}$, the right-hand side of \eqref{3.2} equals (with $h_k=h_k(\nu)$ here)
\begin{eqnarray}
  && \frac {\Lambda_{k}}{\Lambda_{k+1}}\Big (\frac {\lambda_k}{\lambda_{k+1}}\Big)^{\frac 1{p-1}}\Big(h_{k}+\frac {\lambda_k}{\Lambda_k}\nu^{\frac 1{p-1}}\Big)\Big (1-\frac {\lambda_{k}}{\Lambda_k}\Big(h_{k}+\frac {\lambda_k}{\Lambda_k}\nu^{\frac 1{p-1}}\Big)^{p-1} \Big )^{-\frac 1{p-1}}-h_k  \nonumber \\
\label{4.0}
& \geq &  \frac {\Lambda_{k}}{\Lambda_{k+1}}\Big (\frac {\lambda_k}{\lambda_{k+1}}\Big)^{\frac 1{p-1}}\Big(h_{k}+\frac {\lambda_k}{\Lambda_k}\nu^{\frac 1{p-1}}\Big)\Big (1+\frac {\lambda_{k}}{(p-1)\Lambda_k}\Big(h_{k}+\frac {\lambda_k}{\Lambda_k}\nu^{\frac 1{p-1}}\Big)^{p-1} \Big )-h_k \\
& \geq &  \frac {\lambda_{k}}{(p-1)\Lambda_{k+1}}\Big (\frac {\lambda_k}{\lambda_{k+1}}\Big)^{\frac 1{p-1}}\Big(h_{k}+\frac {\lambda_k}{\Lambda_k}\nu^{\frac 1{p-1}}\Big)^p 
+\Big (\frac {\Lambda_{k}}{\Lambda_{k+1}}\Big (\frac {\lambda_k}{\lambda_{k+1}}\Big)^{\frac 1{p-1}}-1 \Big ) \Big(h_{k}+\frac {\lambda_k}{\Lambda_k}\nu^{\frac 1{p-1}}\Big). \nonumber
\end{eqnarray}
  It follows from this and \eqref{1.3'} and \eqref{3.5'} that there exists a constant $C_0>1$, independent of $k$ but may depend on $p$ and an integer $N_1$ independent of $\mu$  such that when $h_{k}(\nu) \geq C_0$, for $k \geq N_1$, we have
\begin{equation*}
  h_{k+1}(1/\mu)-h_k(1/\mu) \geq \frac {C_1\lambda_{k}}{\Lambda_{k+1}}h^2_k(1/\mu),
\end{equation*}
  for some positive constant $C_1>0$.
  We may assume $N_0 \geq N_1$ from now on without loss of generality and we now simplify the above relations by defining $d_{N_0}, d_{N_0+1}, \ldots$, starting with $d_{N_0}=h_{N_0}$, and
\begin{equation}
\label{3.3}
  d_{k+1}-d_k= \frac {C_1\lambda_{k}}{\Lambda_{k+1}}d^2_k.
\end{equation}
  
   Obviously we have 
\begin{equation*}
 d_k \leq h_k \leq  \Big(\frac {\Lambda_k}{\lambda_{k}}\Big )^{\frac 1{p-1}}-\frac {\lambda_{k}}{\Lambda_{k}} \Big ( \frac 1{ \mu} \Big )^{\frac 1{p-1}} \leq \Lambda_k/\lambda_k.
\end{equation*} 
for $N_0 \leq k \leq N_{\mu}$. It follows from \eqref{3.3} that 
\begin{equation*}
  d_{k+1}-d_k \leq C_1d_k.
\end{equation*}
   The above implies that we have $d_{k+1} \leq (C_1+1)d_k$ for $N_0 \leq k \leq N_{\mu}$ and \eqref{3.3} further implies that
\begin{equation}
\label{3.5}
   d_{k+1}-d_k \geq \frac {C_1\lambda_{k}}{(C_1+1)\Lambda_{k+1}}d_kd_{k+1}.
\end{equation}
   We now apply \eqref{3.5'} to obtain via \eqref{3.5} that there exists a constant $C_2>0$ and an integer $N_2$ independent of $\mu$ such that for $k \geq N_2$, 
\begin{equation*}
   d^{-1}_{k}-d^{-1}_{k+1} \geq \frac {C_2}{k+1}.
\end{equation*}
   Certainly we may assume $N_0 \geq N_2$ as well. Summing the above for $N_0 \leq k \leq N_{\mu}-1$ yields:
\begin{equation*}
   \frac {1}{C_0} \geq d^{-1}_{N_0} \geq \sum_{N_0 \leq k \leq N_{\mu}-1}\frac {C_2}{k+1}.
\end{equation*}
   It follows from this that
\begin{equation}
\label{3.6}
  \log N_{\mu}- \log N_0=-\sum_{N_0 \leq k \leq N_{\mu}-1}\log (\frac {k}{k+1})\leq \sum_{N_0 \leq k \leq N_{\mu}-1}\frac {1}{k+1}+O(1)=O(1).
\end{equation}

   We shall see in what follows that the relation \eqref{3.6} implies that there is no harm studying $\log N_0$ instead of $\log N_{\mu}$. So from now on we shall concentrate on finding the smallest $k$ such that $h_k(1/\mu) > C_0$.
\section{Heuristic Treatment}
\label{sec 4} \setcounter{equation}{0}
   Our problem is, roughly, to determine how many steps we have to take in our recurrence \eqref{2.2} in order to push $h_k$ beyond the value of $C_0$,  assuming that $\mu$ is fixed, $0< \mu < (1-L/p)^{-p}$ and $\mu$ close to $(1-L/p)^{-p}$. Now assume we are able to neglect all the higher terms of the right-hand side expression in \eqref{3.2}, then we have a recurrence which can be written as
\begin{equation*}
  \Delta h=\frac {\Lambda_{k}}{\Lambda_{k+1}}\Big (\frac {\lambda_k}{\lambda_{k+1}}\Big)^{\frac 1{p-1}}\Big(h_{k}(\nu)+\frac {\lambda_k}{\Lambda_k}\nu^{\frac 1{p-1}}\Big)\Big (1+\frac {\lambda_{k}}{(p-1)\Lambda_k}\Big(h_{k}(\nu)+\frac {\lambda_k}{\Lambda_k}\nu^{\frac 1{p-1}}\Big)^{p-1} \Big )-h_k(\nu). 
\end{equation*}
  In view of \eqref{1.3'} and \eqref{3.5'}, we may further simplify the above recurrence to be the following:
\begin{equation*}
  \Delta h=\frac {1/L}{k+1}\Big (\frac {h^p_{k}(\nu)}{p-1}-\frac {p-L}{p-1}h_k(\nu)+\nu^{\frac 1{p-1}} \Big ).
\end{equation*}
   Next we consider $k$ as a continuous variable, and we replace the above by the corresponding differential equation, that is, we replace $\Delta h$ by $dh/dk$. Then we get
\begin{equation*}
  \frac {d \log (k+1)}{dh}=L\Big (\frac {h^p_{k}(\nu)}{p-1}-\frac {p-L}{p-1}h_k(\nu)+\nu^{\frac 1{p-1}} \Big )^{-1}.
\end{equation*}
   This suggests that if $N_0$ is the number of steps necessary to increase $h$ from $0$ to about $C_0$, then $\log N_0$ is roughly equal to
\begin{equation}
\label{4.1}
  L\int^{C_0}_0\frac {dh}{\frac {h^p}{p-1}-\frac {p-L}{p-1}h+\nu^{\frac 1{p-1}} }.
\end{equation}
   The integrand has its maximum at $h^{p-1}= (1-L/p)$. In the neighborhood of this maximum it can be approximated by
\begin{equation*}
  \frac p{2}(1-L/p)^{\frac {p-2}{p-1}}\Big(h-(1-L/p)^{\frac 1{p-1}}\Big)^2+\nu^{\frac 1{p-1}}-(1-L/p)^{\frac p{p-1}}.
\end{equation*}
   Therefore the value of \eqref{4.1} can be compared with
\begin{eqnarray*}
  && L\int^{+\infty}_{-\infty}\frac {dh}{ \frac p{2}(1-L/p)^{\frac {p-2}{p-1}}\Big(h-(1-L/p)^{\frac 1{p-1}}\Big)^2+\nu^{\frac 1{p-1}}-(1-L/p)^{\frac p{p-1}}} \\
&=& L \pi\Big (\frac p{2}(1-L/p)^{\frac {p-2}{p-1}}\Big (\nu^{\frac 1{p-1}}-(1-L/p)^{\frac p{p-1}} \Big )\Big )^{-1/2} .
\end{eqnarray*}
   From this we see that for $\mu < (1-L/p)^{-p}, \mu \rightarrow (1-L/p)^{-p}$, we expect to have
\begin{equation}
\label{4.2}
  \log N_{\mu}=L\pi\Big (\frac p{2}(1-L/p)^{\frac {p-2}{p-1}}\Big ((1/\mu)^{\frac 1{p-1}}-(1-L/p)^{\frac p{p-1}} \Big )\Big )^{-1/2}+O(1).
\end{equation}
   From this we see that if $ \mu \rightarrow (1-L/p)^{-p}$, then $\log N_{\mu}$ tends to infinity. This also implies that for the sequence $\{ \mu_k \}$ defined as in \eqref{3.1'}, one must have $\lim_{k \rightarrow +\infty}\mu_k =(1-L/p)^{-p}$. For otherwise, the sequence $\{ \mu_k \}$ is bounded above by a constant $<(1-L/p)^{-p}$ and on taking any $\mu$ greater than this constant (and less than $(1-L/p)^{-p}$), then the left-hand side of \eqref{4.2} becomes infinity (by our definition of $N_{\mu}$) but the right-hand side of \eqref{4.2} stays bounded, a contradiction. 

   Note that if $\mu=\mu_N$, then $N_{\mu}=N$, it follows from \eqref{4.2} that
\begin{equation*}
  (\frac {1}{\mu_N})^{\frac 1{p-1}}=(1-L/p)^{\frac p{p-1}} + \frac {2L^2\pi^2}{p}(1-L/p)^{-\frac {p-2}{p-1}}\Big ( \log N+O(1) \Big )^{-2}.
\end{equation*}
   It is easy to see that the above leads to the following asymptotic expression for $\mu_N$:
\begin{equation*}
  \mu_N =(1-L/p)^{-p} - \frac {2L^2\pi^2(1-L/p)^{-2-p}}{q(\log N)^2}+O\Big (\frac 1{(\log N)^3} \Big ).
\end{equation*}

   There are various doubtful steps in our argument above, but the only one that presents a serious difficulty is the omitting of all the other terms of the right-hand side expression of \eqref{3.2}. Certainly those terms can be expected to give only a small contribution if $k$ is large but the question is whether this contribution is small compared to $h^p/(p-1)-(p-L)h/(p-1)+\nu^{\frac 1{p-1}}$. The latter expression can be small if both $h^{p-1}- (1-L/p)$ and $\mu-(1-L/p)^{-p}$ are small, and it is especially in that region that the integrand of \eqref{4.1} produces its maximal effect.

\section{Lemmas}
\label{sec 5} \setcounter{equation}{0}
\begin{lemma}
\label{lem5.1}
  For any given number $\eta>0, 0< \epsilon <(1-L/p)^{1/(p-1)}$, one can find an integer $k_0 >\eta$ and a number $\beta$, $(1-L/p)^{-p}-1<\beta<(1-L/p)^{-p}$ such that for $\beta < \mu \leq (1-L/p)^{-p}$, 
\begin{equation}
\label{5.00}
  \Big(\frac {p-L}{p} \Big )^{1/(p-1)}-\epsilon < h_{k_0}(1/\mu) <  \Big(\frac {p-L}{p} \Big )^{1/(p-1)}-\frac {1}{2} \frac {\lambda_{k_0}}{\Lambda_{k_0}} \Big(\frac {p-L}{p} \Big )^{1/(p-1)}. 
\end{equation} 
\end{lemma}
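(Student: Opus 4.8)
The plan is to exploit the limit behavior $h_n\big((\tfrac{p-L}{p})^p\big)$ as $n\to\infty$, together with the monotonicity properties established in Section 3, to first pick $k_0$ and then choose $\beta$ by a continuity-in-$\mu$ argument. First I would show that at the critical value $\mu=(1-L/p)^{-p}$, equivalently $\nu=(1-L/p)^{p}$, the sequence $h_n(\nu)$ increases (by the analysis following \eqref{3.2}, since $\nu=(1-L/p)^{p}$ makes $f_k(x_0)\ge 0$ via \eqref{1.4}) and is bounded above by $(\tfrac{p-L}{p})^{1/(p-1)}$ via \eqref{2.4}. Hence $h_n\big((\tfrac{p-L}{p})^p\big)$ converges to some limit $\ell \le (\tfrac{p-L}{p})^{1/(p-1)}$. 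The key point is that $\ell$ must equal $(\tfrac{p-L}{p})^{1/(p-1)}$: passing to the limit in the recurrence \eqref{2.2} (using \eqref{3.5'} to see $\lambda_k/\Lambda_k\to 0$ and \eqref{1.3'} to control the ratios $\lambda_k/\lambda_{k+1}$ and $\Lambda_k/\Lambda_{k+1}\to 1$), any limit $\ell$ of $h_k(\nu)$ satisfies the fixed-point identity forcing $\ell = (\tfrac{p-L}{p})^{1/(p-1)}$; indeed if $\ell$ were strictly smaller, the increments $h_{k+1}-h_k$ would stay bounded below by a positive constant times $\lambda_k/\Lambda_k\sim 1/(Lk)$, whose partial sums diverge, contradicting boundedness. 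So given $\eta>0$ and $\epsilon\in(0,(1-L/p)^{1/(p-1)})$, I can choose an integer $k_0>\eta$ so large that
\begin{equation*}
  \Big(\frac{p-L}{p}\Big)^{1/(p-1)}-\frac{\epsilon}{2} < h_{k_0}\Big(\Big(\frac{p-L}{p}\Big)^p\Big) < \Big(\frac{p-L}{p}\Big)^{1/(p-1)} - \frac{3}{4}\,\frac{\lambda_{k_0}}{\Lambda_{k_0}}\Big(\frac{p-L}{p}\Big)^{1/(p-1)},
\end{equation*}
where the upper bound is guaranteed by \eqref{2.4} itself (since $\Lambda_{k_0-1}/\Lambda_{k_0} = 1-\lambda_{k_0}/\Lambda_{k_0} \le 1 - \tfrac34\lambda_{k_0}/\Lambda_{k_0}$ once $k_0$ is large, using \eqref{3.5'}), and the lower bound comes from the convergence just established.

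Next I would fix this $k_0$ and let $\mu$ decrease from $(1-L/p)^{-p}$. From Section 3, for each fixed $k\le k_0$ the value $h_k(1/\mu)$ is defined and decreasing in $\mu$ on the relevant range, so in particular $h_{k_0}(1/\mu)$ is a continuous, strictly decreasing function of $\mu$ on a left-neighborhood of $(1-L/p)^{-p}$ (it is continuous because each step of the recurrence \eqref{2.2} is continuous in $\nu$ wherever defined, and for $\mu$ close enough to $(1-L/p)^{-p}$ the breakdown \eqref{3.1} has not occurred at step $k_0$ by \eqref{2.3}). At $\mu=(1-L/p)^{-p}$ the value $h_{k_0}(1/\mu)$ lies strictly between $(\tfrac{p-L}{p})^{1/(p-1)}-\tfrac{\epsilon}{2}$ and $(\tfrac{p-L}{p})^{1/(p-1)} - \tfrac34\tfrac{\lambda_{k_0}}{\Lambda_{k_0}}(\tfrac{p-L}{p})^{1/(p-1)}$; by continuity there is $\beta$ with $(1-L/p)^{-p}-1<\beta<(1-L/p)^{-p}$ such that for all $\mu\in(\beta,(1-L/p)^{-p}]$ the value $h_{k_0}(1/\mu)$ still lies between $(\tfrac{p-L}{p})^{1/(p-1)}-\epsilon$ and $(\tfrac{p-L}{p})^{1/(p-1)} - \tfrac12\tfrac{\lambda_{k_0}}{\Lambda_{k_0}}(\tfrac{p-L}{p})^{1/(p-1)}$, which is exactly \eqref{5.00}. (Shrinking $\beta$ if necessary also ensures $\mu>\mu_{k_0-1}$ throughout, so that $h_{k_0}$ is genuinely defined; note that since $\mu_{k_0-1}<(1-L/p)^{-p}$ by \eqref{3.1'}, this is not an obstruction.)

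The main obstacle I anticipate is making rigorous the claim that the limit of $h_n\big((\tfrac{p-L}{p})^p\big)$ is exactly $(\tfrac{p-L}{p})^{1/(p-1)}$ and — more delicately — that the approach to this limit is slow enough, i.e. that for $\mu$ slightly below $(1-L/p)^{-p}$ the value $h_{k_0}(1/\mu)$ has not already been pushed below $(\tfrac{p-L}{p})^{1/(p-1)}-\epsilon$ for the particular finite index $k_0$ we committed to. This is handled by the order of quantifiers: we fix $k_0$ first (depending on $\epsilon$), and only afterward choose $\beta$ depending on $k_0$ and $\epsilon$; since $h_{k_0}(1/\mu)$ is a fixed continuous function near $\mu=(1-L/p)^{-p}$ taking a value strictly inside the target open interval, a suitable $\beta$ exists. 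Verifying the continuity and monotonicity of $\mu\mapsto h_{k_0}(1/\mu)$ near the endpoint requires only the already-established facts of Section 3 (in particular \eqref{2.3}, which guarantees no breakdown for $\mu$ near $(1-L/p)^{-p}$), so no new difficulty arises there.
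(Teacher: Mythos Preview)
Your proposal is correct and follows essentially the same route as the paper. The paper argues by contradiction that not all $h_k\big((1-L/p)^{p}\big)$ can stay below $(\tfrac{p-L}{p})^{1/(p-1)}-\epsilon$ (using the divergence of $\sum 1/(k+1)$ against the upper bound \eqref{2.4}), picks $k_0>\eta$ where the value first exceeds this threshold, reads off the upper bound directly from \eqref{2.4}, and then invokes continuity of $h_{k_0}(1/\mu)$ at $\mu=(1-L/p)^{-p}$ to obtain $\beta$; your convergence-of-a-monotone-bounded-sequence framing is just a repackaging of the same divergence argument, and your continuity step is identical. One small remark: the inequality $1-\lambda_{k_0}/\Lambda_{k_0}\le 1-\tfrac34\,\lambda_{k_0}/\Lambda_{k_0}$ holds for every $k_0$, so no appeal to \eqref{3.5'} or largeness of $k_0$ is needed there.
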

\begin{proof}
  Note first that by \eqref{2.4} (with $\Lambda_0=0$) that
\begin{equation*}
  0 \leq h_k((1-L/p)^{-p}) \leq \frac {\Lambda_{k-1}}{\Lambda_k} \Big(\frac {p-L}{p} \Big )^{1/(p-1)}.
\end{equation*}
  Let $k_1$ be an integer so that for all $k \geq k_1$,
\begin{equation*}
   \frac {\Lambda_{k-1}}{\Lambda_k}\Big(\frac {p-L}{p} \Big )^{1/(p-1)} > \Big(\frac {p-L}{p} \Big )^{1/(p-1)}-\epsilon.
\end{equation*}
   We may assume that $k \geq k_1$ from now on and note that not all $h_k((1-L/p)^{-p})$ are $ \leq (1-L/p)^{1/(p-1)}-\epsilon$. Otherwise, it follows from \eqref{3.2}, \eqref{4.0}, \eqref{1.3'} and \eqref{3.5'} that
\begin{eqnarray*}
 && h_{k+1}((1-L/p)^{-p})-h_k((1-L/p)^{-p}) \\
&\geq & \frac {1/L}{k+1}\Big (\frac {h^p_{k}((1-L/p)^{-p})}{p-1}-\frac {p-L}{p-1}h_k((1-L/p)^{-p})+(1-L/p)^{\frac {p}{p-1}} \Big )+O(\frac {1}{k^2}).
\end{eqnarray*}
  Note that if $h_k(1-L/p)^{-p}) \leq (1-L/p)^{1/(p-1)}-\epsilon$ then
\begin{eqnarray*}
 && \frac {h^p_{k}((1-L/p)^{-p})}{p-1}-\frac {p-L}{p-1}h_k((1-L/p)^{-p})+(1-L/p)^{\frac {p}{p-1}} \\
&\geq & \frac {\Big ((1-L/p)^{1/(p-1)}-\epsilon \Big )^p}{p-1}-\frac {p-L}{p-1}\Big ((1-L/p)^{1/(p-1)}-\epsilon \Big )+(1-L/p)^{\frac {p}{p-1}}>0.
\end{eqnarray*}
  As $\sum^{\infty}_{k=k_1}(k+1)^{-1}=+\infty$, this leads to a contradiction since $h_k((1-L/p)^{-p})$ is bounded above by \eqref{2.4} for any $k$. Thus there is an integer $k_0 > \eta$ for which
\begin{eqnarray*}
 &&  \Big(\frac {p-L}{p} \Big )^{1/(p-1)}-\epsilon< h_{k_0}((1-L/p)^{-p}) \leq \frac {\Lambda_{k_0-1}}{\Lambda_{k_0}}\Big(\frac {p-L}{p} \Big )^{1/(p-1)}  \\
 &&< \Big(\frac {p-L}{p} \Big )^{1/(p-1)}-\frac {1}{2} \frac {\lambda_{k_0}}{\Lambda_{k_0}} \Big(\frac {p-L}{p} \Big )^{1/(p-1)}. 
\end{eqnarray*}
   Having fixed $k_0$ this way, we remark that $h_{k_0}(1/\mu)$ is continuous at $\mu=(1-L/p)^{-p}$ and the lemma follows.
\end{proof}

\begin{lemma}
\label{lem5.2}
  There exist numbers $\beta$, $(1-L/p)^{-p}-1<\beta<(1-L/p)^{-p}$, and $c>0$, $0<\delta <1$ such that for all $\mu$ satisfying $\beta < \mu \leq (1-L/p)^{1/(p-1)}$, and for all $k$ satisfying $1 \leq k \leq N_{\mu}$ ($N_{\mu}$ is the breakdown index) we have  
\begin{equation}
\label{5.0}
 \frac {h^p_{k}(1/\mu)}{p-1}-\frac {p-L}{p-1}h_k(1/\mu)+(1/\mu)^{\frac 1{p-1}}  > c \Big ( \frac {\Lambda_k}{\lambda_k} \Big )^{-\delta}. 
\end{equation} 
\end{lemma}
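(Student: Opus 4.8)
The plan is to split the range $1\le k\le N_\mu$ into two regimes governed by the breakdown index and by Lemma~\ref{lem5.1}, and to bound the quadratic-type expression $Q_k(\mu):=h^p_k/(p-1)-(p-L)h_k/(p-1)+(1/\mu)^{1/(p-1)}$ from below separately in each. Write $h=h_k(1/\mu)$ and recall the elementary fact that the polynomial $g(h)=h^p/(p-1)-(p-L)h/(p-1)+(1/\mu)^{1/(p-1)}$, viewed as a function of $h\ge 0$, attains its minimum at $h^{p-1}=1-L/p$ with minimum value exactly $(1/\mu)^{1/(p-1)}-(1-L/p)^{p/(p-1)}$, which is positive whenever $\mu<(1-L/p)^{-p}$; this is the same elementary computation already carried out in Section~\ref{sec 4}. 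So $Q_k(\mu)$ is always strictly positive for $\mu$ in the stated range, and the only issue is to make the lower bound quantitative, in the form $c(\Lambda_k/\lambda_k)^{-\delta}$, uniformly in $k\le N_\mu$ and in $\mu\in(\beta,(1-L/p)^{-p}]$.

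First I would handle the ``bulk'' regime $h\le (1-L/p)^{1/(p-1)}-\epsilon$ for a fixed small $\epsilon$. Here $g(h)\ge g\big((1-L/p)^{1/(p-1)}-\epsilon\big)=:c_1(\epsilon)>0$ by convexity considerations on $[0,(1-L/p)^{1/(p-1)}]$ (the derivative $g'$ is negative there), so $Q_k(\mu)\ge c_1(\epsilon)$ — a bound independent of $k$, hence certainly $\ge c(\Lambda_k/\lambda_k)^{-\delta}$ once $c$ is small, since $\Lambda_k/\lambda_k\ge\Lambda_1/\lambda_1$. (One also needs $h$ to stay below $(1-L/p)^{1/(p-1)}$ on the other side: but for $\mu\le(1-L/p)^{-p}$ the upper bound \eqref{2.3}/\eqref{2.4} forces $h_k(1/\mu)\le(p-L)/p)^{1/(p-1)}$, so $h$ never exceeds the critical value and the ``large-$h$'' branch of $g$ does not arise.) So the entire difficulty is concentrated in the near-critical window $h\in\big((1-L/p)^{1/(p-1)}-\epsilon,(1-L/p)^{1/(p-1)}\big)$.

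In that window I would use the local quadratic approximation of $g$ around the critical point, namely $g(h)=\tfrac{p}{2}(1-L/p)^{(p-2)/(p-1)}\big(h-(1-L/p)^{1/(p-1)}\big)^2+\big((1/\mu)^{1/(p-1)}-(1-L/p)^{p/(p-1)}\big)+O(\epsilon^3)$, exactly as displayed in Section~\ref{sec 4}. Thus $Q_k(\mu)\gtrsim (1/\mu)^{1/(p-1)}-(1-L/p)^{p/(p-1)}$, so it suffices to bound the ``distance to criticality'' $(1/\mu)^{1/(p-1)}-(1-L/p)^{p/(p-1)}$ from below by a multiple of $(\Lambda_k/\lambda_k)^{-\delta}$ whenever $k\le N_\mu$ and $h_k$ lies in this near-critical window. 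This is where I expect the main obstacle: one must show that if $\mu$ is very close to $(1-L/p)^{-p}$ (so the distance to criticality is tiny), then the recurrence \eqref{2.2} cannot have driven $h_k$ into the near-critical window until $k$ is already large — specifically, until $\Lambda_k/\lambda_k$ is of order at least $\big((1/\mu)^{1/(p-1)}-(1-L/p)^{p/(p-1)}\big)^{-1/\delta}$. Quantitatively: using Lemma~\ref{lem5.1} pick $k_0$ and $\beta$ so that for $\mu\in(\beta,(1-L/p)^{-p}]$ we have $h_{k_0}(1/\mu)$ already within $\epsilon$ of the critical value; then for $k\ge k_0$ the recurrence \eqref{3.2}, together with \eqref{1.3'} and \eqref{3.5'}, gives $h_{k+1}-h_k\ge \tfrac{1/L}{k+1}g(h_k)+O(k^{-2})\ge \tfrac{1/L}{k+1}\big((1/\mu)^{1/(p-1)}-(1-L/p)^{p/(p-1)}\big)+O(k^{-2})$, and since $h$ is bounded above by $(p-L)/p)^{1/(p-1)}$ for all $k\le N_\mu$, summing from $k_0$ to $N_\mu-1$ forces $\log N_\mu \le C\big((1/\mu)^{1/(p-1)}-(1-L/p)^{p/(p-1)}\big)^{-1}+C$, i.e. the distance to criticality is $\gtrsim 1/(\log N_\mu+C)$. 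Combining with \eqref{3.5'}, which gives $\Lambda_k/\lambda_k=Lk+O(\log k)$ hence $\log(\Lambda_k/\lambda_k)\asymp\log k\le\log N_\mu$, yields $(1/\mu)^{1/(p-1)}-(1-L/p)^{p/(p-1)}\gtrsim 1/\log(\Lambda_k/\lambda_k)\gg (\Lambda_k/\lambda_k)^{-\delta}$ for any fixed $\delta\in(0,1)$ once $k$ is large, which is the desired inequality in the hard regime. For the remaining finitely many $k<k_0$ (and for $\mu$ in a compact range bounded away from $(1-L/p)^{-p}$, where $Q_k$ is uniformly positive), one adjusts $c$ and $\beta$ by a direct finite check. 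The delicate point throughout is to keep all the implied constants independent of $\mu$, which is exactly what Lemma~\ref{lem5.1} was set up to provide.
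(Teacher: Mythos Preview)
Your proposal has a genuine gap in the ``near-critical'' regime, and the logical chain you sketch there does not close.

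First a minor but real error: the upper bound in \eqref{2.3}/\eqref{2.4} is stated for $\mu \geq (1-L/p)^{-p}$, not $\mu \leq (1-L/p)^{-p}$. For $\mu < (1-L/p)^{-p}$ the sequence $h_k$ does eventually exceed the critical value $(1-L/p)^{1/(p-1)}$ (indeed it blows up at breakdown; see Section~\ref{sec 3}). So the parenthetical claim that ``the large-$h$ branch of $g$ does not arise'' is false.

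The more serious problem is the step where you go from $\eta:=(1/\mu)^{1/(p-1)}-(1-L/p)^{p/(p-1)} \gtrsim 1/\log N_\mu$ to $\eta \gtrsim 1/\log(\Lambda_k/\lambda_k)$. You only know $\log k \le \log N_\mu$, which gives $1/\log N_\mu \le 1/\log k$, the wrong direction. Concretely: take $\mu$ very close to $(1-L/p)^{-p}$, so $\eta$ is tiny and $N_\mu$ is huge. For $k$ only slightly larger than $k_0$ (a fixed integer), $h_k$ may already sit inside your near-critical window, yet $(\Lambda_k/\lambda_k)^{-\delta}$ is of order a fixed constant $\sim (Lk_0)^{-\delta}$, far larger than $\eta$. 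Your lower bound $Q_k(\mu)\gtrsim \eta$ is correct but simply too weak here: $\eta$ depends only on $\mu$, not on $k$, so it cannot dominate $(\Lambda_k/\lambda_k)^{-\delta}$ when $k$ is moderate.

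What the paper does, and what your argument is missing, is to exploit \emph{both} terms in the quadratic approximation simultaneously. It sets
\[
t_k \;=\; \epsilon_1\bigl((1-L/p)^{1/(p-1)}-h_k\bigr) \;+\; \eta,
\]
linearizes the recurrence \eqref{3.2} (using $0<(1-L/p)^{1/(p-1)}-h_k<\epsilon$) to obtain $t_{k+1} > t_k\bigl(1-\epsilon_1\lambda_k/\Lambda_{k+1}\bigr) - O(\lambda_k^2/(\Lambda_k\Lambda_{k+1}))$, and iterates this to prove a \emph{power-law} lower bound $t_k \gtrsim (\Lambda_k/\lambda_k)^{-\epsilon_2}$ for $k$ in a range determined by \eqref{5.8}. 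Since $Q_k \gtrsim t_k^2$ by the quadratic approximation, this yields \eqref{5.0} with $\delta = 2\epsilon_2$. For $k$ beyond that range, the failure of \eqref{5.8} says precisely that $\eta$ itself already dominates $(\Lambda_k/\lambda_k)^{-\epsilon_2}$, and then your bound $Q_k\ge\eta$ suffices. The point is that the distance $(1-L/p)^{1/(p-1)}-h_k$ from the critical value shrinks only like a power of $\Lambda_k/\lambda_k$, not faster, and tracking this via $t_k$ is the missing ingredient.
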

\begin{proof}
  We apply Lemma \ref{lem5.1} with $\eta$ large enough so that the following inequality holds for any integer $k \geq \eta$ and $(1-L/p)^{-p}-1 < \mu \leq (1-L/p)^{-p}$:
\begin{equation}
\label{5.1}
(1-L/p)^{1/(p-1)}+\frac {\lambda_k}{\Lambda_k}(1/\mu)^{\frac 1{p-1}} < 1. 
\end{equation}  
  We shall also choose $\epsilon$ small enough so that we obtain values of $k_0$ and $\beta$. Without loss of generality, we may assume $\mu < (1-L/p)^{-p}$ and for the time being we keep $\mu$ fixed ($\beta < \mu < (1-L/p)^{-p}$) and we write $h_k$ instead of $h_k(\mu)$.

  As we remarked in Section \ref{sec 3}, the sequence $h_{k_0}, h_{k_0+1}, \ldots$ is increasing, possibly until breakdown. We shall now first consider those integers $k \geq k_0$ for which $h_{k}< (1-L/p)^{1/(p-1)}$. For those $k$, it follows from \eqref{3.2}, \eqref{1.3'} and \eqref{3.5'} that
\begin{eqnarray}
 &&  h_{k+1}-h_k = \frac {\lambda_k}{\Lambda_{k+1}}\Big ( \frac {h^p_k}{p-1}-\frac {p-L}{p-1}h_k+(1/\mu)^{\frac 1{p-1}} \Big )+O(\frac {\lambda^2_k}{\Lambda_{k}\Lambda_{k+1}}) \nonumber \\
\label{5.2}
&&<    \frac {\lambda_k}{\Lambda_{k+1}}\Big ( B \Big ((1-L/p)^{1/(p-1)}-h_{k}\Big)^2+(1/\mu)^{\frac 1{p-1}}- (1-L/p)^{p/(p-1)}+ C_3 \frac {\lambda_k}{\Lambda_k}  \Big ),
\end{eqnarray}
  for some constant $C_3>0$, where
\begin{equation*}
  B=\frac p{2}\Big ( 1-L/p \Big )^{(p-2)/(p-1)}.
\end{equation*}
  We have by Lemma \ref{lem5.1}, $0<(1-L/p)^{1/(p-1)} -h_k < \epsilon$, and therefore we can replace \eqref{5.2} by the linear recurrence relation
\begin{equation}
\label{5.3}
  h_{k+1}-h_k < \frac {\lambda_k}{\Lambda_{k+1}}\Big ( \epsilon B \Big((1-L/p)^{1/(p-1)}-h_{k}\Big)+(1/\mu)^{\frac 1{p-1}}- (1-L/p)^{p/(p-1)}+C_3 \frac {\lambda_k}{\Lambda_k} \Big ).
\end{equation}
  We now let $\epsilon_1= \epsilon B$ and put
\begin{equation}
\label{5.3'}
   \epsilon_1  \Big((1-L/p)^{1/(p-1)}-h_{k}\Big)+(1/\mu)^{\frac 1{p-1}}- (1-L/p)^{p/(p-1)} =t_k,
\end{equation}
   so that it follows from \eqref{5.3} that
\begin{equation*}
   t_{k+1} > t_k\Big (1-\frac {\epsilon_1 \lambda_k}{\Lambda_{k+1}} \Big )-\frac {\epsilon_1  C_3\lambda^2_k}{\Lambda_{k}\Lambda_{k+1}} .
\end{equation*}
  We may assume our $\epsilon$ is so chosen so that $0< \epsilon< 1/(2p)$ and that $0<\epsilon_1<1/3$ and note that we have $L \leq 1$ by \eqref{5} so that when $p \geq 2$, we have $(1-L/p)^{1/(p-1)}>\epsilon$ so that by Lemma \ref{lem5.1} that $h_{k_0} >0$. Now it follows from $1-\epsilon x > (1-x)^{\epsilon}, 0<x<1$ that
\begin{equation*}
   t_{k+1} > t_k(\Lambda_k)^{\epsilon_1}(\Lambda_{k+1})^{-\epsilon_1}-\frac {\epsilon_1  C_3\lambda^2_k}{\Lambda_{k}\Lambda_{k+1}} 
=t_{k}(\frac {\Lambda_{k}}{\lambda_k})^{\epsilon_1}(\frac {\Lambda_{k+1}}{\lambda_k})^{-\epsilon_1}-\frac {\epsilon_1  C_3\lambda^2_k}{\Lambda_{k}\Lambda_{k+1}}.
\end{equation*}
  It follows from \eqref{1.6} that the sequence $\{ \Lambda_k/\lambda_k \}^{\infty}_{k=1}$ is increasing and we deduce that
\begin{equation*}
   t_{k+1} > t_{k_0}(\frac {\Lambda_{k_0}}{\lambda_{k_0}})^{\epsilon_1}(\frac {\Lambda_{k+1}}{\lambda_k})^{-\epsilon_1}-\frac {\epsilon_1  C_3\lambda^2_k}{\Lambda_{k}\Lambda_{k+1}},
\end{equation*}
   Note that $h_{k_0}$ is bounded below by Lemma \ref{lem5.1} and it follows from \eqref{1.3'} and \eqref{3.5'} that we may take $\eta$ large enough so that
we have
\begin{equation}
\label{5.6}
   t_{k+1} > t_{k_0}(\frac {\Lambda_{k_0}}{\lambda_{k_0}})^{\epsilon_1}(\frac {\Lambda_{k+1}}{\lambda_k})^{-\epsilon_2}
\end{equation}
  for some positive constant $\epsilon_2$.  As $t_{k_0} >0$, the above implies $t_k>0$ for all $k$ under consideration. We want the above to hold for all $k$ under consideration, i.e. for all $k$ for which $h_k < (1-L/p)^{1/(p-1)}$. This is certainly satisfied if $t_k>(1/\mu)^{\frac 1{p-1}}- (1-L/p)^{p/(p-1)}$, and \eqref{5.6} guarantees that this is true as long as the right-hand side expression of \eqref{5.6} is $>(1/\mu)^{\frac 1{p-1}}- (1-L/p)^{p/(p-1)}$. Therefore
\begin{equation}
\label{5.7}
  t_k \geq t_{k_0}(\frac {\Lambda_{k_0}}{\lambda_{k_0}})^{\epsilon_1}(\frac {\Lambda_{k}}{\lambda_{k-1}})^{-\epsilon_2}
\end{equation}
   for all $k > k_0$ satisfying
\begin{equation}
\label{5.8}
  \frac {\Lambda_{k}}{\lambda_{k-1}} < \Big (\frac {\Lambda_{k_0}}{\lambda_{k_0}} \Big )^{\epsilon_1/\epsilon_2}t^{1/\epsilon_2}_{k_0}\Big ((1/\mu)^{\frac 1{p-1}}- (1-L/p)^{p/(p-1)} \Big )^{-1/\epsilon_2}.
\end{equation}
   As $h_k < (1-L/p)^{1/(p-1)}$, we are sure that no breakdown occurs in this range by \eqref{3.1} and \eqref{5.1}.

   Now we return to the discussion on \eqref{5.0}. When $0< h <(1-L/p)^{1/(p-1)}$, we note that it is easy to see that there exists a constant $c_1=B/(p-1)>0$ such that for $0< x <(1-L/p)^{1/(p-1)}$,
\begin{equation*}
  \frac {x^p}{p-1}-\frac {p-L}{p-1}x \geq c_1\Big ((1-L/p)^{1/(p-1)}-x \Big )^2-(1-L/p)^{p/(p-1)}.
\end{equation*}
 Note also that $0 <(1/\mu)^{\frac 1{p-1}}- (1-L/p)^{p/(p-1)}<1$,and that
\begin{eqnarray*}
 && \frac {h^p}{p-1}-\frac {p-L}{p-1}h+(1/\mu)^{\frac 1{p-1}} \\
&>&(1/\mu)^{\frac 1{p-1}}- (1-L/p)^{p/(p-1)}+ c_1\Big ((1-L/p)^{1/(p-1)}-h \Big )^2 \\
&>&  \Big ( (1/\mu)^{\frac 1{p-1}}- (1-L/p)^{p/(p-1)} \Big )^2+ c_1\Big ((1-L/p)^{1/(p-1)}-h \Big )^2 \\
& >& \frac 1{2}\Big ((1/\mu)^{\frac 1{p-1}}- (1-L/p)^{p/(p-1)}+\sqrt{c_1}\Big ( (1-L/p)^{1/(p-1)}-h \Big ) \Big )^2,
\end{eqnarray*} 
  where the last inequality above follows from $u^2+c_1v^2 = u^2+(\sqrt{c_1}v)^2 \geq (u+\sqrt{c_1}v)^2/2$ for $u, v>0$. Apply this with $h=h_k$ and note that $\sqrt{c_1} \geq \epsilon B$ since $0< \epsilon < 1/(2p)$, so that it follows from \eqref{5.3'} and \eqref{5.7} that
\begin{equation*}
   \sqrt{c_1}\Big ( (1-L/p)^{1/(p-1)}-h_k \Big )+(1/\mu)^{\frac 1{p-1}}- (1-L/p)^{p/(p-1)} > t_k \geq t_{k_0}(\frac {\Lambda_{k_0}}{\lambda_{k_0}})^{\epsilon_1}(\frac {\Lambda_{k}}{\lambda_{k-1}})^{-\epsilon_2},
\end{equation*}
   This implies that the left-hand side of \eqref{5.0} is at least
\begin{equation*}
    \frac {t^2_{k_0}}{2}(\frac {\Lambda_{k_0}}{\lambda_{k_0}})^{2\epsilon_1}(\frac {\Lambda_{k}}{\lambda_{k-1}})^{-2\epsilon_2}.
\end{equation*}
   This holds for $k$ when \eqref{5.8} is satisfied. It follows from \eqref{1.3} that 
$\lambda_{k}/\lambda_{k-1}$ is bounded above for any $k \geq 2$. Let $c_2$ denote such an upper bound and we conclude that 
the left-hand side of \eqref{5.0} is at least
\begin{equation*}
    \frac {t^2_{k_0}}{2c^{2\epsilon_2}_2}(\frac {\Lambda_{k_0}}{\lambda_{k_0}})^{2\epsilon_1}(\frac {\Lambda_{k}}{\lambda_{k}})^{-2\epsilon_2}:=c_3(\frac {\Lambda_{k}}{\lambda_{k}})^{-2\epsilon_2}.
\end{equation*}
   Other $k$'s do not cause much trouble. First, for the values $1 \leq k < k_0$, we have 
\begin{equation*}
 h_k(\mu) \leq h_{k_0}(\mu) <  \Big(\frac {p-L}{p} \Big )^{1/(p-1)}-\frac {1}{2} \frac {\lambda_{k_0}}{\Lambda_{k_0}} \Big(\frac {p-L}{p} \Big )^{1/(p-1)}
\end{equation*}
   by Lemma \ref{lem5.1} and the fact that $h_k$ increases as $k$ increases. It follows that
\begin{eqnarray*}
&& \frac {h^p_k}{p-1}-\frac {p-L}{p-1}h_k+(1/\mu)^{\frac 1{p-1}} >   c_1\Big((1-L/p)^{1/(p-1)} - h_k \Big )^2 >  \frac {c_1\lambda^2_{k_0}}{4\Lambda^2_{k_0}}\Big(\frac {p-L}{p} \Big )^{2/(p-1)} \\
&&\geq \frac {c_1\lambda^2_{k_0}}{4\Lambda^2_{k_0}}\Big(\frac {p-L}{p} \Big )^{2/(p-1)}(\frac {\Lambda_{k}}{\lambda_{k}})^{-2\epsilon_2}:=c_4(\frac {\Lambda_{k}}{\lambda_{k}})^{-2\epsilon_2}.
\end{eqnarray*} 
   Now, for the remaining case $k_0 \leq k \leq  N_{\mu}$ (which is empty if $\mu=(1-L/p)^{-p}$) such that
\begin{equation*}
   \frac {\Lambda_{k}}{\lambda_{k-1}} \geq \Big (\frac {\Lambda_{k_0}}{\lambda_{k_0}} \Big )^{\epsilon_1/\epsilon_2}t^{1/\epsilon_2}_{k_0}\Big ((1/\mu)^{\frac 1{p-1}}- (1-L/p)^{p/(p-1)} \Big )^{-1/\epsilon_2},
\end{equation*}
 we note that as the sequence $\{ \lambda_k \}^{\infty}_{k=1}$ is non-decreasing and the sequence $\{ \Lambda_k/\lambda_k \}^{\infty}_{k=1}$ is increasing by \eqref{1.6}, if the above inequality holds for some $k'$ then it holds for all $k \geq k'$ and in this case we use that 
\begin{equation*}
 \frac {h^p}{p-1}-\frac {p-L}{p-1}h+(1/\mu)^{\frac 1{p-1}} > (1/\mu)^{\frac 1{p-1}}- (1-L/p)^{p/(p-1)}
\end{equation*} 
  for all $h$ to see that the left-hand side of \eqref{5.0} is at least
\begin{equation*}
  \Big (\frac {\Lambda_{k_0}}{\lambda_{k_0}} \Big )^{\epsilon_1}\frac {t_{k_0}}{c^{\epsilon_2}_2}(\frac {\Lambda_{k}}{\lambda_{k}})^{-2\epsilon_2}:=c_5(\frac {\Lambda_{k}}{\lambda_{k}})^{-2\epsilon_2}.
\end{equation*}
  In all three cases the constants are independent of $\mu$ and $k$, so on letting $c=\min (c_3, c_4, c_5)$ and $\delta=2\epsilon_2$ completes the proof of the lemma.
\end{proof}

\begin{lemma}
\label{lem5.3}
  There exist numbers $\beta$, $(1-L/p)^{-p}-1<\beta<(1-L/p)^{-p}$ such that for all $\mu$ satisfying $\beta < \mu < (1-L/p)^{-p}$ there exists an index $N <  N_{\mu}$ with $h_N > C_0$ with $C_0$ defined as in Section \ref{sec 3}.
\end{lemma}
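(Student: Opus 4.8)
The plan is to exploit the breakdown mechanism of Section~\ref{sec 3} directly. We run the recurrence \eqref{2.2} from $h_1=0$ up to the breakdown index $N_\mu$ and pit two facts against each other: the breakdown condition \eqref{3.1} forces $h_{N_\mu}(1/\mu)$ to be large as soon as $N_\mu$ is large, whereas a single step of \eqref{2.2} applied to a \emph{bounded} value $h_{N_\mu-1}(1/\mu)$ can only produce a value that is again essentially bounded by the same constant; reconciling the two will show that $h_k(1/\mu)$ has already exceeded $C_0$ at some index $N$ strictly below $N_\mu$. For the setup, recall from \eqref{3.1'} that $1=\mu_1<\mu_2<\cdots<(1-L/p)^{-p}$ and from Section~\ref{sec 4} that $\lim_{k\to\infty}\mu_k=(1-L/p)^{-p}$, so that for every integer $M$ one has $N_\mu<\infty$ and $N_\mu>M$ whenever $\mu_M<\mu<(1-L/p)^{-p}$. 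We shall take $\beta:=\mu_M$ for a suitably large $M$ fixed below, enlarging $M$ so that $(1-L/p)^{-p}-1<\beta<(1-L/p)^{-p}$ and $\beta>1$; in particular $0<1/\mu<1$ for $\beta<\mu<(1-L/p)^{-p}$.

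Now fix $\mu$ with $\beta<\mu<(1-L/p)^{-p}$ and set $m:=N_\mu$. Since $m$ is, by definition, the least index for which \eqref{3.1} holds,
\begin{equation*}
 h_m(1/\mu)\ \ge\ \Big(\frac{\Lambda_m}{\lambda_m}\Big)^{\frac1{p-1}}-\frac{\lambda_m}{\Lambda_m}\Big(\frac1\mu\Big)^{\frac1{p-1}}\ \ge\ \Big(\frac{\Lambda_m}{\lambda_m}\Big)^{\frac1{p-1}}-\frac{\lambda_m}{\Lambda_m}\ =:\ R_m ,
\end{equation*}
and $R_k\to+\infty$ because \eqref{3.5'} gives $\Lambda_k/\lambda_k\to+\infty$. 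There is an $M_1$ with $R_k>C_0$ for all $k\ge M_1$. Then $N_\mu\ge M_1$ already gives $h_m(1/\mu)>C_0$, so $\{k\ge1:h_k(1/\mu)>C_0\}$ is nonempty; let $N_0$ be its least element, so $N_0\le N_\mu$. Since the $h_k(1/\mu)$ increase with $k$ (Section~\ref{sec 3}), all that remains is to exclude $N_0=N_\mu$.

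Suppose $N_0=m=N_\mu$, so $h_{m-1}(1/\mu)\le C_0<h_m(1/\mu)$. No breakdown occurs before index $m$, so \eqref{2.2} is valid at $k=m-1$. Writing $\xi:=\lambda_{m-1}/\Lambda_{m-1}$ and using $\Lambda_{m-1}/\Lambda_m\le1$, $\lambda_{m-1}\le\lambda_m$ and $0<1/\mu<1$, together with the monotonicity (in $x$) of $x(1-\xi x^{p-1})^{-1/(p-1)}$ on $\{x>0:\xi x^{p-1}<1\}$, the right-hand side of \eqref{2.2} at $k=m-1$ is at most
\begin{equation*}
 g(\xi)\ :=\ \big(C_0+\xi\big)\Big(1-\xi\,(C_0+\xi)^{p-1}\Big)^{-\frac1{p-1}} ,
\end{equation*}
valid once $\xi(C_0+\xi)^{p-1}<1$. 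By \eqref{3.5'}, $\xi\to0$ as $m\to\infty$, hence $g(\xi)\to C_0$; choose $M_2\ge M_1$ so that $\xi(C_0+\xi)^{p-1}<1$ and $R_k>g(\lambda_{k-1}/\Lambda_{k-1})$ hold for all $k\ge M_2$. Then, as soon as $N_\mu\ge M_2$, we reach the contradiction $R_m\le h_m(1/\mu)\le g(\xi)<R_m$. Taking $M:=M_2$ (enlarged if necessary as above) and $\beta:=\mu_M$ therefore yields $N_0<N_\mu$ for every $\mu\in(\beta,(1-L/p)^{-p})$, and $N:=N_0$ is the index required by the lemma.

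The genuinely delicate point is the last step. A priori the ``breakdown denominator'' $1-\tfrac{\lambda_{m-1}}{\Lambda_{m-1}}\big(h_{m-1}(1/\mu)+\tfrac{\lambda_{m-1}}{\Lambda_{m-1}}(1/\mu)^{1/(p-1)}\big)^{p-1}$ occurring in \eqref{2.2} at index $m-1$ is known only to be positive (breakdown being imminent at index $m$), so one step of \eqref{2.2} might in principle inflate a bounded $h_{m-1}(1/\mu)$ into an arbitrarily large $h_m(1/\mu)$; it is precisely hypothesis \eqref{3.5'}, which makes the weight $\lambda_{m-1}/\Lambda_{m-1}$ small for large $m$, that keeps this denominator bounded away from $0$ and so blocks such a jump. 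The remaining verifications --- monotonicity of $x(1-\xi x^{p-1})^{-1/(p-1)}$, the limits $R_k\to+\infty$ and $g(\xi)\to C_0$, and the existence of the requisite $M$ --- are routine.
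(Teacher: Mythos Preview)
Your argument hinges on the claim $\lim_{k\to\infty}\mu_k=(1-L/p)^{-p}$, which you cite from Section~\ref{sec 4}. That section, however, is explicitly labelled ``Heuristic Treatment'': the asymptotic \eqref{4.2} and its consequence $\mu_k\to(1-L/p)^{-p}$ are only made rigorous in Section~\ref{sec 6}, \emph{using} Lemma~\ref{lem5.3}. So your proof is circular. Concretely, the statement $\lim_k\mu_k=(1-L/p)^{-p}$ is equivalent to ``$N_\mu<\infty$ for every $\mu<(1-L/p)^{-p}$'', and nothing proved rigorously before Lemma~\ref{lem5.3} excludes the possibility that $\mu^*:=\sup_k\mu_k<(1-L/p)^{-p}$. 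For $\mu\in(\mu^*,(1-L/p)^{-p})$ one would then have $N_\mu=+\infty$, and your entire mechanism---take $m=N_\mu$, invoke \eqref{3.1} at $m$, bound a single step of \eqref{2.2} at $m-1$---simply does not apply. In that regime you still owe an argument that some finite $N$ has $h_N(1/\mu)>C_0$; monotonicity of $h_k$ alone is not enough, since an increasing sequence can stay bounded.

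The paper's proof sidesteps this trap entirely: it never assumes $N_\mu<\infty$. Instead it uses Lemma~\ref{lem5.1} to place $h_{k_0}$ near $(1-L/p)^{1/(p-1)}$, then shows via the expansion $h_{k+1}-h_k\ge \tfrac{\lambda_k}{\Lambda_{k+1}}\big((1/\mu)^{1/(p-1)}-(1-L/p)^{p/(p-1)}\big)+O(k^{-2})$ that the increments form a divergent series, forcing $h_k$ past $C_0$ at some finite index; a one-step bound (your $g(\xi)$ idea, essentially \eqref{5.14}) then keeps that index below $N_\mu$ via \eqref{5.12}. Your contradiction at the end is in the same spirit as this last step and is fine on its own; what is missing is the divergence argument that guarantees $N_0$ exists at all without presupposing finiteness of $N_\mu$.
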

\begin{proof}
  We apply Lemma \ref{lem5.1} with $\eta$ large enough and some $0< \epsilon <1$, so that the following estimation holds for any integer $k \geq \eta$ and $\mu > (1-L/p)^{-p}-1$:
\begin{equation}
\label{5.12}
\Big(\frac {\Lambda_k}{\lambda_{k}}\Big )^{\frac 1{p-1}}-\frac {\lambda_{k}}{\Lambda_{k}} \Big ( \frac 1{ \mu} \Big )^{\frac 1{p-1}} > C_0+1.
\end{equation}  
  Moreover, we can also take $\eta$ large enough so that for any integer $k \geq \eta$ and $\mu > (1-L/p)^{-p}-1$, if $h_k(1/\mu)<C_0+1$, then
\begin{equation}
\label{5.14}
h_{k+1}-h_k = \frac {\lambda_k}{\Lambda_{k+1}}\Big ( \frac {h^p_k}{p-1}-\frac {p-L}{p-1}h_k+(1/\mu)^{\frac 1{p-1}} \Big )+O(\frac {\lambda^2_k}{\Lambda_{k}\Lambda_{k+1}})<1.
\end{equation}  
  Note that it follows from \eqref{3.2}, \eqref{1.3'} and \eqref{3.5'} and our assumption on $h_k$ that the above requirement can be satisfied. Lemma \ref{lem5.1} now provides us with $k_0>\eta$ and $\beta$ such that \eqref{5.00} holds. We now consider the numbers $h_{k_0}, h_{k_0+1}, \ldots$ as far as they are $< C_0+1$. If $k \geq k_0$, $h_k< C_0+1$, we have $k < N_{\mu}$ by \eqref{5.12} and by our definition of the breakdown index (see \eqref{3.1}). It also follows from \eqref{3.2}, \eqref{1.3'} and \eqref{3.5'} that
\begin{eqnarray*}
  h_{k+1}-h_k &=& \frac {\lambda_k}{\Lambda_{k+1}}\Big ( \frac {h^p_k}{p-1}-\frac {p-L}{p-1}h_k+(1/\mu)^{\frac 1{p-1}} \Big )+O(\frac {\lambda^2_k}{\Lambda_{k}\Lambda_{k+1}}) \\
& \geq &  \frac {\lambda_k}{\Lambda_{k+1}}\Big ( (1/\mu)^{\frac 1{p-1}}- (1-L/p)^{p/(p-1)}\Big )+O(\frac {\lambda^2_k}{\Lambda_{k}\Lambda_{k+1}}).
\end{eqnarray*}
  The lower bound above shows that not for all $k \geq k_0$ we have $h_k \leq C_0$, since $\sum^{+\infty}_{k_0}(h_{k+1}-h_k)$ would diverge in view of \eqref{3.5'}.

   It follows from \eqref{5.14} that if we let $h_{k_1}$ be the last one below $C_0$, then $h_{k_1+1}$ is still below $C_0+1$ so that we can take $N=k_1+1$ here and this completes the proof.
\end{proof}

\section{Proof of Theorem \ref{thm2}}
\label{sec 6} \setcounter{equation}{0}
   As suggested by the discussion in Section \ref{sec 4}, we shall study $\theta(h_k)$, where $\theta$ is defined by
\begin{equation*}
   \theta(y)=\int^{y}_{0}\frac {dx}{\frac {x^p}{p-1}-\frac {p-L}{p-1}x+(1/\mu)^{\frac 1{p-1}}}.
\end{equation*}
   We first simplify the recurrence formula \eqref{3.2}. Assuming
\begin{equation}
\label{6.1}
    (1-L/p)^{-p}-1 < \mu \leq (1-L/p)^{-p}, \hspace{0.1in} h_k \leq C_0,
\end{equation}
  where $C_0$ is defined as in Section \ref{sec 3}. We may also assume $k$ is large enough so that \eqref{3.1} is not satisfied. We have, by \eqref{3.2} and Taylor expansions,
\begin{equation*}
   h_{k+1}-h_k=\frac {\lambda_k}{\Lambda_{k+1}}\Big ( \frac {h^p_k}{p-1}-\frac {p-L}{p-1}h_k+(1/\mu)^{\frac 1{p-1}}+\gamma_k  \Big ),
\end{equation*}
   where
\begin{equation*}
  |\gamma_k|  \leq C_2\frac {\lambda_k}{\Lambda_{k}},
\end{equation*}
   for some constant $C_2>0$. It follows from this that there exists a constant $C_3>0$ such that 
\begin{equation*}
    \Big| h_{k+1}-h_k \Big| \leq C_3\frac {\lambda_k}{\Lambda_{k+1}}.
\end{equation*}
   We then deduce easily from above that for $h_k \leq x \leq h_{k+1}$,
\begin{equation*}
   \Big |\frac {x^p}{p-1}-\frac {p-L}{p-1}x-\Big(\frac {h^p_k}{p-1}-\frac {p-L}{p-1}h_k \Big ) \Big | \leq C_4\frac {\lambda_k}{\Lambda_{k+1}} \leq C_4\frac {\lambda_k}{\Lambda_{k}},
\end{equation*}
   where $C_4>0$ is a constant not depending on $\mu$ or $k$ (still assuming \eqref{6.1}).

   We now apply the mean value theorem to get:
\begin{equation*}
   \theta(h_{k+1})-\theta(h_{k})=(h_{k+1}-h_k)\theta '(x)
\end{equation*}
   with some $x$ in between $h_k$ and $h_{k+1}$. Hence it follows from our discussion above that
\begin{equation*}
   \theta(h_{k+1})-\theta(h_{k})=\frac {\lambda_k}{\Lambda_{k+1}}\frac {H+\gamma_k}{H+\gamma '_k},
\end{equation*}
  where 
\begin{equation*}
  H=\frac {h^p_k}{p-1}-\frac {p-L}{p-1}h_k+(1/\mu)^{\frac 1{p-1}}, \hspace{0.1in} |\gamma_k| \leq C_2\frac {\lambda_k}{\Lambda_{k}}, \hspace{0.1in} |\gamma '_k| \leq C_4\frac {\lambda_k}{\Lambda_{k}}.
\end{equation*}
   
   We now apply Lemma \ref{lem5.2} to conclude that there exists a $\beta_1$ with $(1-L/p)^{-p}-1 < \beta_1 < (1-L/p)^{-p}$ and a $c>0$, $0<\delta <1$  such that for all $\mu$ satisfying $\beta < \mu \leq (1-L/p)^{-p}$, and for all $k$ satisfying $1 \leq k \leq N_{\mu}$, we have
\begin{equation*}
  H > c\Big ( \frac {\Lambda_k}{\lambda_k} \Big )^{-\delta}.
\end{equation*}   
   This implies that
\begin{equation*}
 |\gamma_k| \leq \frac {C_2}{c} \Big (\frac {\lambda_k}{\Lambda_{k}} \Big )^{1-\delta}H, \hspace{0.1in} |\gamma '_k| \leq \frac {C_4}{c} \Big (\frac {\lambda_k}{\Lambda_{k}} \Big )^{1-\delta}H.
\end{equation*} 
   Note it follows from \eqref{3.5'} that
\begin{equation*}
  \frac {\lambda_k}{\Lambda_{k+1}}-\frac {\lambda_k}{\Lambda_{k}}=-\frac {\lambda_k\lambda_{k+1}}{\Lambda_{k}\Lambda_{k+1}}=O(\frac 1{k^2}).
\end{equation*}
   It follows from this and \eqref{3.5'} that we can find an integer $m$, independent of $\mu$ such that for $k>m, h_k < C_0$, we have
\begin{equation*}
   \theta(h_{k+1})-\theta(h_{k})=\frac {\lambda_k}{\Lambda_{k}}\frac {H+\gamma_k}{H+\gamma '_k}+O(\frac 1{k^2})=\frac {1}{Lk}+O \Big (\frac 1{k^2}+\frac 1{k^{2-\delta}} \Big ).
\end{equation*}
   We recast the above as
\begin{equation*}
   \Big|\theta(h_{k+1})-\theta(h_{k})- \log (1+1/k)/L \Big | =O \Big (\frac 1{k^2}+\frac 1{k^{2-\delta}} \Big ).
\end{equation*}
   Now assuming $\mu < (1-L/p)^{-p}$, we take the sum over the values $m \leq k <N_0$, where $N_0$ is the first index with $h_{N_0} > C_0$ (see Lemma \ref{lem5.3}). This gives us
\begin{equation*}
    \Big|\theta(h_{N_0})- \log N_0/L  \Big| =O (1)+\log m/L+\theta(h_m).
\end{equation*}

   By Lemma \ref{lem5.1}, for any $\eta>M$, there exists $\beta_2, \beta_1 < \beta_2 <(1-L/p)^{-p}$ and $k_0>\eta$ so that 
\begin{equation*}
 h_{k_0}(\mu) < \Big(\frac {p-L}{p} \Big )^{1/(p-1)}-\frac {1}{2} \frac {\lambda_{k_0}}{\Lambda_{k_0}} \Big(\frac {p-L}{p} \Big )^{1/(p-1)}.
\end{equation*}
 We now further take the integer $m$ to be equal to this $k_0$. Thus, the maximum of the integrand in $\theta(h_m)$ is attained at $x=h_m$ and that by Taylor expansion again,
\begin{eqnarray*}
 &&  \frac {h^p_m}{p-1}-\frac {p-L}{p-1}h_m+(1/\mu)^{\frac 1{p-1}} \\
 & \geq & \frac {1}{p-1}\Big ( \Big(\frac {p-L}{p} \Big )^{1/(p-1)}-\frac {1}{2} \frac {\lambda_{m}}{\Lambda_{m}} \Big(\frac {p-L}{p} \Big )^{1/(p-1)} \Big )^p \\
&& -\frac {p-L}{p-1}\Big ( \Big(\frac {p-L}{p} \Big )^{1/(p-1)}-\frac {1}{2} \frac {\lambda_{m}}{\Lambda_{m}} \Big(\frac {p-L}{p} \Big )^{1/(p-1)} \Big )+(1-L/p)^{\frac p{p-1}}  \\
 & \geq &  \frac {p}{8}\Big(\frac {p-L}{p} \Big )^{p/(p-1)}\frac {\lambda^2_{m}}{\Lambda^2_{m}}\Big( 1-\frac {\lambda_{m}}{2\Lambda_{m}} \Big)^{p-2}.
\end{eqnarray*}
  It follows that
\begin{eqnarray*}
 && \theta(h_m)=\int^{h_m}_{0}\frac {dx}{\frac {x^p}{p-1}-\frac {p-L}{p-1}x+(1/\mu)^{\frac 1{p-1}}} \\
&& \leq \Big(\frac {p-L}{p} \Big )^{1/(p-1)}\Big ( \frac {p}{8}\Big(\frac {p-L}{p} \Big )^{p/(p-1)}\frac {\lambda^2_{m}}{\Lambda^2_{m}}\Big( 1-\frac {\lambda_{m}}{2\Lambda_{m}} \Big)^{p-2} \Big )^{-1}=O(1).
\end{eqnarray*}
   We deduce from this that
\begin{equation}
\label{6.2}
    \Big|\theta(h_{N_0})- \log N_0/L  \Big| =O (1).
\end{equation}

   It is not difficult to find the asymptotic behavior of $\theta(\infty)$. If $\mu <(1-L/p)^{-p}$, $\mu \rightarrow (1-L/p)^{-p}$, then routine methods (cf. Sec. \ref{sec 4}) lead to
\begin{equation*}
   \theta(\infty)=\int^{\infty}_{0}\frac {dx}{\frac {x^p}{p-1}-\frac {p-L}{p-1}x+(1/\mu)^{\frac 1{p-1}}}=\pi\Big (\frac p{2}(1-L/p)^{\frac {p-2}{p-1}}\Big ((1/\mu)^{\frac 1{p-1}}-(1-L/p)^{\frac p{p-1}} \Big )\Big )^{-1/2} +O(1).
\end{equation*}
   It is also easy to see that $\theta(\infty)-\theta(C_0) = O(1)$. As $h_{N_0} \geq C_0$, we have $\theta(C_0) \leq \theta(h_{N_0}) < \theta(\infty)$.  
It follows from \eqref{6.2} that
\begin{equation*}
   \log N_0=L\pi\Big (\frac p{2}(1-L/p)^{\frac {p-2}{p-1}}\Big ((1/\mu)^{\frac 1{p-1}}-(1-L/p)^{\frac p{p-1}} \Big )\Big )^{-1/2} +O(1).
\end{equation*}
  According to \eqref{3.6} and our discussion in Section \ref{sec 4}, this completes the proof of \eqref{4.2} and it was already shown there that \eqref{4.2} leads to our assertion for Theorem \ref{thm2}.
  
\section{An Application of Theorem \ref{thm2}}
\label{sec 7} \setcounter{equation}{0}
   As we mentioned as the beginning of the paper that the main cases we are interested in are the cases $\lambda_k=k^{\alpha}$ and here we may assume  $\alpha \geq 1$. Of all the conditions in the statement of Theorem \ref{thm2}, it is easy to check \eqref{5}-\eqref{1.6} are satisfied with $L=1/(\alpha+1)$ for our cases here (for example, see \cite[Section 7]{G6}). It is easy to see that \eqref{1.4} follows from $g_k(1/p) \geq 0$ where (with $\lambda_0=0$)
\begin{equation*}
  g_k(x)=\frac {\Lambda_{k}}{\Lambda_{k+1}}\Big ( \frac {\Lambda_{k+1}/\lambda_{k+1}}{\Lambda_{k}/ \lambda_{k}} \Big )^x-\frac {\lambda_{k}}{\Lambda_{k}}Lx-\frac {\Lambda_{k-1}}{\Lambda_{k}}.
\end{equation*}
   It follows from \eqref{1.6} and \eqref{5} that for $0 < x \leq 1$,
\begin{eqnarray*}
   g'_k(x) &=& \frac {\Lambda_{k}}{\Lambda_{k+1}}\log \Big ( \frac {\Lambda_{k+1}/\lambda_{k+1}}{\Lambda_{k}/ \lambda_{k}} \Big ) \Big ( \frac {\Lambda_{k+1}/\lambda_{k+1}}{\Lambda_{k}/ \lambda_{k}} \Big )^x-\frac {\lambda_{k}}{\Lambda_{k}}L \\
 & \leq & \frac {\lambda_{k}}{\lambda_{k+1}}\log \Big ( \frac {\Lambda_{k+1}/\lambda_{k+1}-\Lambda_{k}/ \lambda_{k}}{\Lambda_{k}/ \lambda_{k}}+1 \Big )-\frac {\lambda_{k}}{\Lambda_{k}}L \\
& \leq & \frac {\lambda_{k}}{\lambda_{k+1}}\Big ( \frac {L}{\Lambda_{k}/ \lambda_{k}}\Big )-\frac {\lambda_{k}}{\Lambda_{k}}L \leq 0,
\end{eqnarray*}
  where the last inequality above follows as $\{ \lambda_k \}^{\infty}_{k=1}$ is a non-decreasing positive sequence.
 
   We then deduce that for $0<x \leq 1$,
\begin{equation*}
   g_k(x) \geq g_k(1)=\frac {\lambda_{k}}{\lambda_{k+1}}-\frac {\lambda_{k}}{\Lambda_{k}}L-\frac {\Lambda_{k-1}}{\Lambda_{k}}.
\end{equation*}
   Note that $g_k(1)=0$ when $\lambda_k=k$, so that \eqref{1.4} is satisfied and we deduce the following
\begin{cor}
\label{cor1}
  Let $\lambda_k=k$ for $k \geq 1$. Then for $p \geq 2$, inequality \eqref{1} holds with the best constant satisfying:
\begin{equation*}
   \mu_N=(1-1/(2p))^{-p} - \frac {\pi^2(1-1/(2p))^{-2-p}}{2q(\log N)^2}+O\Big (\frac 1{(\log N)^3} \Big ).
\end{equation*}  
\end{cor}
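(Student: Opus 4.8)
The plan is to obtain Corollary \ref{cor1} as the special case $\alpha = 1$ (so $\lambda_k = k$) of Theorem \ref{thm2}. Since the theorem supplies all the analytic content, the whole task reduces to (i) checking that the six hypotheses \eqref{5}--\eqref{1.4} hold for $\lambda_k = k$ with $L = 1/2$, and (ii) substituting $L = 1/2$ into the theorem's asymptotic formula. First I would record the identities $\Lambda_n = n(n+1)/2$ and hence $\Lambda_n/\lambda_n = (n+1)/2$; from the latter, $\Lambda_{n+1}/\lambda_{n+1} - \Lambda_n/\lambda_n \equiv 1/2$, so both the supremum in \eqref{5} and the infimum in \eqref{1.6} equal $1/2$, whence $L = 1/2$, which satisfies $0 < L \leq 1$.

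The remaining ``easy'' hypotheses then follow by direct expansion: \eqref{1.3} holds since $\lambda_{k+1}/\lambda_k = (k+1)/k$ is bounded; \eqref{1.3'} holds since $1/L - 1 = 1$ and $\lambda_k/\lambda_{k+1} = k/(k+1) = 1 - 1/k + O(1/k^2)$; and \eqref{3.5'} holds since $1/(Lk) = 2/k$ and $\lambda_k/\Lambda_k = 2/(k+1) = 2/k + O(1/k^2)$.

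The one hypothesis that requires an argument is \eqref{1.4}, and here I would use the auxiliary function
\[
  g_k(x) = \frac{\Lambda_k}{\Lambda_{k+1}}\Big(\frac{\Lambda_{k+1}/\lambda_{k+1}}{\Lambda_k/\lambda_k}\Big)^x - \frac{\lambda_k}{\Lambda_k}Lx - \frac{\Lambda_{k-1}}{\Lambda_k},
\]
noting, after collecting the exponents and multiplying through by $\Lambda_k/\lambda_k$, that \eqref{1.4} is precisely the assertion $g_k(1/p) \geq 0$, which is meaningful since $p \geq 2$ gives $0 < 1/p \leq 1$. Differentiating and invoking \eqref{1.6}, \eqref{5}, and the fact that $\{\lambda_k\}$ is non-decreasing shows $g_k'(x) \leq 0$ on $(0,1]$, so $g_k(1/p) \geq g_k(1)$; and a one-line computation with $\lambda_k = k$ gives $g_k(1) = \frac{k}{k+1} - \frac{1}{k+1} - \frac{k-1}{k+1} = 0$. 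Hence \eqref{1.4} holds for every $k$. This is the step I expect to be the main obstacle, although the device $g_k$ turns it into a short monotonicity check.

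With all hypotheses of Theorem \ref{thm2} verified at $L = 1/2$, the conclusion follows on substituting: $(1 - L/p)^{-p} = (1 - 1/(2p))^{-p}$ and $2L^2\pi^2 = \pi^2/2$, so the theorem's expansion becomes
\[
  \mu_N = (1 - 1/(2p))^{-p} - \frac{\pi^2(1 - 1/(2p))^{-2-p}}{2q(\log N)^2} + O\Big(\frac{1}{(\log N)^3}\Big),
\]
which is exactly the assertion of Corollary \ref{cor1}.
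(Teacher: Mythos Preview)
Your proposal is correct and follows essentially the same route as the paper: verify hypotheses \eqref{5}--\eqref{1.6} directly from $\Lambda_n/\lambda_n=(n+1)/2$, reduce \eqref{1.4} to $g_k(1/p)\ge 0$ via the auxiliary function $g_k$, use $g_k'\le 0$ on $(0,1]$ together with $g_k(1)=0$ for $\lambda_k=k$, and then substitute $L=1/2$ into the conclusion of Theorem~\ref{thm2}. The only cosmetic slip is the phrase ``multiplying through by $\Lambda_k/\lambda_k$'' in your reduction of \eqref{1.4} to $g_k(1/p)\ge 0$; in fact one simply rearranges (equivalently, multiplies by $\lambda_k/\Lambda_k$), but the identification itself is correct.
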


     Things are more complicated for other $\alpha$'s in general. For example, one can see that $g(1)<0$ when $\alpha=3$ but $g_k(1/2)>0$ for $k \geq 2$ and as we have assumed $p \geq 2$, this implies Theorem \ref{thm2} for $\lambda_k=k^3$.  In fact,  a close look at the proof of Theorem \ref{thm2} shows that one only needs \eqref{1.4} to hold asymptotically, namely, for all large $k$'s. We shall leave the more general discussions to the reader.  

\end{document}